\documentclass{amsart}
\usepackage{amssymb}
\usepackage{graphicx}
\def\R{{\mathbb R}}
\def\N{{\mathbb N}}

\def\Z{{\mathbb Z}}
\def\1{{1\!\!1}}
\def\E{{\mathbb E}}
\def\P{{\mathbb P}}

\makeatletter

\@addtoreset{equation}{section}
\makeatother

\title{Equivalence of ensembles  for large  vehicle-sharing models}

\author{Christine Fricker}
\address[Christine Fricker]{INRIA Paris-Rocquencourt, Domaine de Voluceau, 78153 Le Chesnay CX, France}
\email{christine.fricker@inria.fr}

\author{Danielle Tibi}
\address[Danielle Tibi]{Universit\'e Paris Diderot, b\^atiment Sophie Germain,  case 7012, 75013 Paris, France}
\email{tibi@math.univ-paris-diderot.fr}

\keywords{Closed Jackson  networks, finite capacity queues, product form distribution,  equivalence of ensembles, asymptotic independence, Local Limit Theorem. }
\subjclass{}

\newtheorem{theorem}{Theorem}[section]
\newtheorem{prop}{Proposition}[section]
\newtheorem{cor}{Corollary}[section]

\newtheorem{lemma}{Lemma}[section]
\newtheorem{rem}{Remark}[section]

\newcommand{\be}{\begin{equation}}
\newcommand{\ee}{\end{equation}}

\begin{document}

\date{\today}

\begin{abstract}
For a class of large closed Jackson networks  submitted to capacity constraints, asymptotic independence of  the nodes  in  normal traffic phase  is proved  at stationarity  under mild assumptions,  using a Local Limit  Theorem. The limiting distributions of  the queues are explicit.  In the  Statistical Mechanics terminology, the equivalence of ensembles - canonical and grand canonical - is proved for specific marginals. The framework includes the case of networks with two types of nodes: single server/finite capacity nodes and infinite servers/infinite capacity  nodes, that can be taken as basic models for bike-sharing systems.  The effect of local saturation is modeled by generalized blocking and rerouting procedures, under which the stationary state is proved to have product-form. The grand canonical approximation can then be used for adjusting the total number of bikes and  the capacities of the stations to the expected demand.
 
\end{abstract}

\maketitle

\section{Introduction}\label{sec1}
 Many cities are  now equipped with bike-sharing systems. Those have met a great success among the public, but they  turn out to face  some difficulty   for adapting to the demand of users. One major  inherent problem is due to  the   spatial  heterogeneity  of the demand. It results in the common observation that, during the day,  some parking stations stay most of the time empty, while others keep many vehicles unused.  Some sites may indeed be more popular  for picking a vehicle than for returning it.  This induces two possible drawbacks for users, namely,  finding  no vehicle for rent at their starting point, or  no parking room  at the end of their ride. Indeed, since parked vehicles are locked, each station has only a given number of docking places.  This capacity limitation may be most problematic, making customers loose time in an additional  run (and  then  possibly a walk back to their former  destination)   just for parking their vehicle. On the contrary, finding no vehicle at some station is more simply resolved - using alternative means of  transport. A major concern, when designing such a network, is thus to find the best acceptable tradeoff, relating to the total number of  offered vehicles,  as function of the different capacities of the stations. A low supply of vehicles produces empty stations, while a large supply causes  saturation.

  \smallskip

  The purpose of the present paper is to analyze large, inhomogeneous networks  with finite capacity stations.  The performance of such models is estimated through explicit asymptotics of their stationary state. Performance indicators such as the total rate of failure can then be optimized by adapting the flexible parameters (the total number of vehicles  and, possibly, the sizes of the stations).   Note, yet, that this  may not be sufficient to reach a good quality of service. Ultimately, different devices should be implemented in order to  limit service failure, or disparity between stations: For  example, setting up a vehicle reservation service, or  supplying empty stations with vehicles (moved from saturated stations),  or  else displaying   online updates of the current state of the network (so that one knows where parking room or vehicles are available). In this regard, besides quantifying  the weaknesses of the simplest  primitive system, we introduce  state-dependent routing procedures that may open the way to such improvements. 

 \smallskip

  Time inhomogeneity  is  another concern: The behavior of users should vary in time,  more or less obeying a 24 hours cycle. This issue  will not be addressed in this paper, where all processes will be assumed to have  constant traffic parameters and to have reached their stationary regime. In this respect, a major issue would be  to estimate  the relaxation time  -  or time to reach equilibrium - of such  processes, and  compare it  with the duration of the constant parameter phase of interest.
  
  \smallskip
   It has been commonly observed  that  vehicle-sharing systems  can be modeled by closed Jackson networks. In this description, the vehicles  play the role of the customers  - which number is fixed - while the  users act as successive servers at the parking stations. The associated network has two types of nodes: one server nodes, that describe the stations, and infinite server nodes, representing  the different routes between the stations. In  \cite{Fayolle-7} a detailed  analysis of large, infinite capacity, closed Jackson  networks  at equilibrium is proposed. Application to vehicle sharing systems - at their early experimental stage in the 90's  -  is briefly considered.   The asymptotics of  a model  mentioned in  \cite{Fayolle-7} is examined in  \cite{George-1} as the number of nodes (stations and routes) is fixed, while the number of customers (vehicles) grows to infinity. Both  papers crucially rely on the explicit product-form of the stationary distribution, which is standard in the infinite capacity case.

    In \cite{FrickerGast-velib},  assuming complete homogeneity of the traffic and equal finite  capacities  of the stations,  a mean-field asymptotic behavior is obtained for the  dynamics of the distribution of vehicles over the different  sites.  Using the same method, \cite{Heterogeneous} extends the study  to systems where inhomogeneity is modeled by clusters. In addition, different alternatives are investigated, as  for example, avoiding empty (respectively full) stations when taking  (respectively returning) a bike,  or  returning bikes at the station  having  the largest available space,  between two randomly chosen stations.

The present paper analyzes the impact of finite capacity on large inhomogeneous networks.  The models considered thus involve real-world blocking and rerouting mechanisms, that we implement on the infinite capacity model of  \cite{Fayolle-7} and  \cite{George-1}. The crucial product-forms of the stationary states are  proved for  a class of state-dependent routings that extends the  classical setting of \cite{Economou-1}.  Asymptotics are obtained as both the number of vehicles and the size of the network increase. Namely, it is  proved that the  finite dimensional distributions of the stations occupation process at stationarity converge to products of truncated geometric distributions. If the total number of bikes does not exceed some order fo magnitude, stations and routes are moreover asymptotically independent, the latter being  approximately distributed as Poisson variables.  The approach, at stationarity,  is similar to  \cite{Fayolle-7} and  \cite{George-1}.
 As compared to  the dynamical  point of view of  \cite{FrickerGast-velib} and \cite{Heterogeneous}, our results complete the  picture of the equilibrium state  there obtained for locally homogeneous systems.

 \smallskip
 Our asymptotics fit the frame of the so-called principle of equivalence of canonical and grand canonical ensembles,  in the Statistical Mechanics terminology.  The networks of interest  are here  in some ``partially  subcritical" phase, in the sense that  part of  the network - namely, the parking stations - is in normal traffic (or no-condensation) regime. This makes  that a Local Limit Theorem  can be used.
This way of deriving the grand canonical approximation is standard and can be traced back to Khinchin \cite{Khinchin-1}.  Dobrushin and Tirozzi \cite{Dobrushin-1}  have used it for analyzing a family of Gibbs measures on $\Z^2$. It has, since  then, proved useful in many different contexts, notably for exclusion  and zero-range processes under thermodynamic limit (\cite{Kipnis-1} Appendix 2).  Note that zero-range processes are identical to (homogeneous) closed Jackson networks,  but due to homogeneity, condensation can only occur in specific attractive cases, which are not considered in  \cite{Kipnis-1}. Such supercritical phases are described in  \cite{Grosskinsky-1}  and  \cite{Armendariz-1}.  The generalized Jackson networks with blocking considered in Section \ref{sec4}  contain as particular cases the generalized exclusion processes analyzed in \cite{Kipnis-1}.

 Contrary to the classical setting of the equivalence of ensembles - or its alternative formulation as Gibbs conditioning principle (\cite{Dembo-1}) - the equilibrium states of  general  closed Jackson networks  are non-symmetrical with respect to the nodes.  Malyshev and Yakovlev (\cite{Malyshev-6}) have used analytical methods to address the  problem  for  classical (one-server nodes) closed Jackson networks, under existence of a limiting density of customers -  as in  \cite{Kipnis-1} -  and of a limiting profile, capturing inhomogeneity, for the  traffic parameters. The critical phenomenon of condensation is highlighted and both  phases  are studied. These results are extended in   \cite{Fayolle-7}, using Local Limit Theorems, which is more flexible and does not require specification of a limiting density nor parameter profile.   
Note yet  that this method does not lead  to a complete and unified treatment of supercritical regimes, for which  restrictive assumptions  are needed.

We use the same approach and for simplicity,  restrict the analysis to the  relevant dynamics  in respect to the targeted  vehicle-sharing applications. Though  our interest focuses on finite capacity systems, the infinite capacity case - that could equally be derived from \cite{Fayolle-7} - is included,  as a reference and  for completeness. Similarly,  several standard results (as regards the Local Limit Theorem, or  generalized Jackson networks) are here stated, under specific assumptions adapted to the present context,   for the sake of self-containment.

 The paper is organized  as follows. Section \ref {sec2} displays the Central Limit and  Local Limit  Theorems - for independent, non identically distributed, random variables -  that will be used in the sequel.  Section  \ref {sec3} introduces the generalized Jackson processes of interest and states the  product-forms  of the associated  stationary distributions. In Section  \ref {sec4},  the   equivalence of ensembles is proved for two classes of  networks (with respectively infinite and finite capacities).  Applications to evaluating the performance of bike-sharing systems is finally developed in Section \ref {sec5}.

\section{A Local Limit Theorem}\label{sec2}
It is classical that the Central Limit Theorem holds for any sequence $(S_N)_{N \ge 1}$ of sums of independent square integrable random variables  $(X_{j,N})_{1 \le j \le J(N)}$
\[ S_N = \sum_ {j=1}^{J(N)} X_{j,N}\]
if the family  $(X_{j,N})_{1 \le j \le J(N)}$   satisfies the following so-called {\it Lyapunov condition}:
\begin{equation}\label{Lyapunov}
\exists \delta >0 \   \text{ such that } \    \lim _{N \to \infty}  \frac{1}{b_N^{2+\delta}} \sum _{j=1}^{J(N)} \E \left ( |X_{j,N} - m_{j,N}|^{2+\delta}\right ) =0.
\end{equation}
Here and in the sequel, for all $N \ge 1$, $J(N)$ is some integer such that $J(N) \ge 1$, and we use the following notations: 
\begin{multline*}\label{moments}
 m_{j,N} = \E(X_{j,N}) \quad (1 \le j  \le J(N)),  \qquad    a_N = \E(S_N) = \sum_{j=1}^{J(N)} m_{j,N}    \qquad  \text { and } \\ 
  \sigma^2_{j,N} = \E ([X_{j,N}-m_{j,N}]^2) \ , 
  \qquad   b_N^2 = \E ([S_N -a_N]^2) =  \sum _{j=1}^{J(N)} \sigma^2_{j,N}. 
 \end{multline*}
This result is usually derived from the Lindeberg Central Limit Theorem, which states that  under the following {\it Lindeberg condition} \eqref{Lindeberg}, which is weaker than \eqref{Lyapunov}, the characteristic function (or Fourier transform) of  $b_N^{-1}(S_N -a_N)$ converges to that of the standard normal  distribution (see  \cite{Billingsley3}, or \cite{Feller-2}):
\begin{equation}\label{Lindeberg}
\forall \varepsilon >0, \qquad  \lim _{N \to \infty}  \frac{1}{b_N^{2}} \sum _{j=1}^{J(N)}  \E \left ( (X_{j,N} - m_{j,N})^{2} \cdot  {\bf 1}_{|X_{j,N} - m_{j,N}| > \varepsilon b_N} \right ) =0.
\end{equation}

 It is easily seen that, under the Lyapunov condition, the convergence is uniform on some sequence of intervals $[-A_N, A_N]$ which length grows to infinity. This refinement of the Central Limit Theorem is stated below as {\it Lyapunov Central Limit Theorem}. It will be used for deriving the next {\it Local Limit Theorem} that is the key to the equivalence of ensembles. 
 
 The proofs of both theorems, as well as those of the two next propositions are deferred to the Appendix.

\medskip

Let   $(X_{j,N})_{ N \ge 1, 1 \le j \le J(N)}$ be a family of square integrable random variables  such that, for all $N \ge 1$,   $X_{1,N},  \dots , X_{J(N),N}$ are independent. Then define  $S_N, a_N$ and $b_N$  as above.

\begin{theorem}\label{TCL} {\bf (Lyapunov Central Limit Theorem)}

If the Lyapunov condition \eqref{Lyapunov} is satisfied, then the following convergence holds,
\begin{equation}\label{uniform}
 \lim _{N \to \infty} \sup _{\ | t |  \le  A_N}\left |  \, \E \left ( e^{ it \frac{S_N-a_N}{b_N}}\right ) -  e^{-t^2/2} \, \right | = 0,
  \end{equation}
for some sequence $(A_N)$ of positive real numbers converging to infinity.

\end{theorem}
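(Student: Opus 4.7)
The plan is to quantify the classical Lindeberg--L\'evy argument so as to obtain an explicit bound on $|\Phi_N(t) - e^{-t^2/2}|$, where $\Phi_N(t) := \E[e^{it(S_N-a_N)/b_N}]$, and then to exhibit a sequence $A_N\to\infty$ on which this bound is $o(1)$ uniformly in $|t|\le A_N$. Writing $Y_{j,N} := X_{j,N} - m_{j,N}$, $s := t/b_N$, and $\phi_{j,N}(s) := \E[e^{isY_{j,N}}]$, one has $\Phi_N(t) = \prod_{j=1}^{J(N)}\phi_{j,N}(s)$.

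I would begin with a one-term Taylor bound. Since reducing $\delta$ only strengthens \eqref{Lyapunov}, assume $\delta\in(0,1]$. Comparing the second- and third-order Taylor expansions of $x\mapsto e^{ix}$ yields
\[
\bigl|e^{ix} - 1 - ix + x^2/2\bigr| \le \min\!\bigl(|x|^3/6,\, x^2\bigr) \le C_\delta\, |x|^{2+\delta} \qquad (x\in\R).
\]
Applied to $x = sY_{j,N}$ and integrated, using $\E Y_{j,N}=0$, this gives
\[
\bigl|\phi_{j,N}(s) - 1 + \tfrac12 s^2 \sigma_{j,N}^2\bigr| \le C_\delta\, |s|^{2+\delta}\, \E|Y_{j,N}|^{2+\delta}.
\]

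Next I would pass to the product. Setting $w_{j,N} := \exp(-s^2\sigma_{j,N}^2/2)$, one has $\prod_j w_{j,N} = e^{-t^2/2}$ by the definition of $b_N^2$. The inequality $|e^{-u}-1+u|\le u^2/2$ for $u\ge 0$, combined with the previous Taylor bound and with the standard estimate $\bigl|\prod_j z_j - \prod_j w_j\bigr| \le \sum_j |z_j - w_j|$ (valid whenever all moduli are at most one, which is automatic for characteristic functions and for the $w_{j,N}$'s), leads to
\[
\bigl|\Phi_N(t) - e^{-t^2/2}\bigr| \le C_\delta |t|^{2+\delta} L_N \;+\; \frac{t^4}{8}\, \max_{1\le j\le J(N)} \frac{\sigma_{j,N}^2}{b_N^2},
\]
where $L_N := b_N^{-(2+\delta)}\sum_j \E|Y_{j,N}|^{2+\delta}\to 0$ by \eqref{Lyapunov}. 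Jensen's inequality gives $\sigma_{j,N}^{2+\delta} \le \E|Y_{j,N}|^{2+\delta}$, hence $\max_j\sigma_{j,N}^2/b_N^2 \le L_N^{2/(2+\delta)}\to 0$.

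Any $A_N := L_N^{-\alpha}$ with $0 < \alpha < 1/(2(2+\delta))$ (taking $A_N$ arbitrary when $L_N = 0$) then satisfies $A_N\to\infty$, $A_N^{2+\delta} L_N\to 0$, and $A_N^4 L_N^{2/(2+\delta)}\to 0$ simultaneously; plugging into the display above gives \eqref{uniform}. The only mildly delicate point is the presence of two error terms of different orders in the product bound, forcing one to balance their rates of decay; this is a routine exponent comparison and no real obstacle, since the Lyapunov hypothesis precisely supplies the moment control needed for each of them.
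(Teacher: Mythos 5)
Your proof is correct. The Taylor estimate $|e^{ix}-1-ix+x^2/2|\le\min(|x|^3/6,\,x^2)\le 6^{-\delta}|x|^{2+\delta}$ for $\delta\in(0,1]$ (and you may indeed reduce to $\delta\le 1$, since Lyapunov's condition for an exponent $\delta$ implies it for any smaller exponent by a H\"older interpolation -- worth a line of justification), the product comparison $|\prod_j z_j-\prod_j w_j|\le\sum_j|z_j-w_j|$ for moduli at most one, and the bound $\max_j\sigma_{j,N}^2/b_N^2\le L_N^{2/(2+\delta)}$ via Jensen are all sound, and the choice $A_N=L_N^{-\alpha}$ with $0<\alpha<1/(2(2+\delta))$ does make both error terms vanish uniformly on $[-A_N,A_N]$ (the degenerate case $L_N=0$ cannot occur when $b_N>0$). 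This is, however, a genuinely different route from the paper's. The paper first shows that \eqref{Lyapunov} implies the strengthened Lindeberg condition \eqref{Lindeberg+}, in which the fixed truncation level $\varepsilon$ of \eqref{Lindeberg} is replaced by a vanishing sequence $\varepsilon_N=\alpha_N^{1/(2\delta)}$, and then asserts that \eqref{uniform} follows ``along the lines of'' Billingsley's proof of the Lindeberg theorem, without exhibiting $A_N$ or any rate. Your argument trades the paper's extra generality (condition \eqref{Lindeberg+} is weaker than \eqref{Lyapunov}, so the paper's intermediate reduction covers a larger class of triangular arrays) for self-containment and an explicit, quantitative construction of the sequence $A_N$ whose existence the theorem claims; since every application in the paper verifies Lyapunov with $\delta=1$ anyway, nothing is lost.
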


All the models of interest in this paper will satisfy the Lyapunov condition with $\delta =1$.

\medskip
Deriving of the Local Limit Theorem  from the Central Limit Theorem  for integer valued random variables has become standard since the  paper by Gnedenko \cite{Gnedenko-1}, addressing the case of i.i.d.~variables. We here give a version adapted to our context. 

\begin{theorem}\label{TLL} {\bf (Local Limit Theorem)} 

Assume that the random variables $X_{j,N}$ are integer $\Z$-valued and that the following conditions hold: 
\begin{enumerate}
\item $\lim _{N \to \infty} b_N = + \infty$,
\item  there exists some  sequence $(A_N)$ of positive real numbers converging to infinity  such that \eqref{uniform}  holds, 
\item  there exists some $ \phi \in L^1(\R)$ such that for all $N \ge 1$  and all $t \in [-\pi, \pi]$,
\[  | \,  \E(e^{it S_N}) \, | \le \phi(b_N t).\]
\end{enumerate}
Then the following is true
\begin{equation} \label{tll} \lim _{N \to \infty} \  \sup _{k \in \Z} \left [ b_N \sqrt{2 \pi} \,  \P(S_N =k) -  \exp \left (-\frac{(k-a_N)^2}{2 b^2_N} \right )\right ] ~=~0.
\end{equation}

\end{theorem}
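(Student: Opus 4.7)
The plan is to combine the classical Fourier inversion formula for integer-valued variables with the uniform Central Limit approximation supplied by condition~(2). Since $S_N$ takes values in $\Z$,
\[ \P(S_N=k) = \frac{1}{2\pi}\int_{-\pi}^{\pi} e^{-itk}\, \E(e^{itS_N})\,dt. \]
Writing $\psi_N(s) := \E\!\left(e^{is(S_N-a_N)/b_N}\right)$ and performing the change of variable $s=b_N t$ transforms this into
\[ b_N\sqrt{2\pi}\, \P(S_N=k) = \frac{1}{\sqrt{2\pi}}\int_{-\pi b_N}^{\pi b_N} e^{-is(k-a_N)/b_N}\,\psi_N(s)\,ds. \]
On the Gaussian side, the standard identity $\frac{1}{\sqrt{2\pi}}\int_\R e^{-isx-s^2/2}\,ds = e^{-x^2/2}$ at $x=(k-a_N)/b_N$ yields
\[ \exp\!\left(-\frac{(k-a_N)^2}{2b_N^2}\right) = \frac{1}{\sqrt{2\pi}}\int_{\R} e^{-is(k-a_N)/b_N}\, e^{-s^2/2}\,ds. \]
Subtracting and using $|e^{-is(k-a_N)/b_N}|=1$ bounds the quantity inside the supremum in \eqref{tll}, uniformly in $k$, by
\[ \frac{1}{\sqrt{2\pi}}\left( \int_{-\pi b_N}^{\pi b_N} \bigl|\psi_N(s)-e^{-s^2/2}\bigr|\,ds + \int_{|s|>\pi b_N} e^{-s^2/2}\,ds \right). \]

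The heart of the argument is then the classical Gnedenko tripartite splitting of the first integral at the threshold $A_N$ of condition~(2). After possibly replacing $A_N$ by $\min(A_N,\eps_N^{-1/2})$, where $\eps_N := \sup_{|t|\le A_N}|\psi_N(t)-e^{-t^2/2}|$, I may assume that $A_N\to\infty$ and $A_N\eps_N\to 0$ simultaneously. On $\{|s|\le A_N\}$ the integrand is at most $\eps_N$, contributing at most $2A_N\eps_N$, which tends to zero. On $\{A_N<|s|\le\pi b_N\}$, condition~(3) gives $|\psi_N(s)| = |\E(e^{i(s/b_N)S_N})| \le \phi(s)$, so $|\psi_N(s)-e^{-s^2/2}| \le \phi(s)+e^{-s^2/2}$, and the contribution is bounded by $\int_{|s|>A_N}[\phi(s)+e^{-s^2/2}]\,ds$, which vanishes because $\phi, e^{-s^2/2}\in L^1(\R)$ and $A_N\to\infty$. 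Finally, $\int_{|s|>\pi b_N} e^{-s^2/2}\,ds\to 0$ thanks to condition~(1). All three estimates are independent of $k$, hence the supremum in \eqref{tll} converges to zero.

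The only delicate point is that the CLT convergence rate $\eps_N$ is not automatically fast enough to dominate the growth of $A_N$; this is what forces the mild reduction of $A_N$ performed above. Once this is handled, the rest is nearly mechanical: the three regions correspond to the three natural sources of error --- Gaussian approximation of the characteristic function on a shrinking neighborhood of the origin, nontrivial decay of the characteristic function away from the origin (secured by the integrable majorant $\phi$ on $[-\pi b_N,\pi b_N]$), and the Gaussian tail outside $[-\pi b_N,\pi b_N]$.
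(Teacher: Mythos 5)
Your proof is correct and follows essentially the same route as the paper's: Fourier inversion on $[-\pi,\pi]$, the Gaussian integral identity, and a tripartite splitting of the error with condition (2) controlling the central region, the integrable majorant $\phi$ controlling $A_N<|s|\le \pi b_N$, and the Gaussian tail controlling $|s|>\pi b_N$. The only (equally valid) variation is in the central region, where you use the crude bound $2A_N\eps_N$ after shrinking $A_N$ to $\min(A_N,\eps_N^{-1/2})$, whereas the paper keeps the weight $e^{-s^2/2}$ inside the integral and instead upgrades \eqref{uniform} to uniform smallness of $e^{t^2/2}\psi_N(t)-1$ on $[-B_N,B_N]$ with $B_N=\min(A_N,\sqrt{-\log\beta_N})$.
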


We will now consider two examples of interest for our applications of Section \ref{sec4}. Conditions (i) of the following Propositions \ref{geompoisson} and \ref{trunkgeompoisson}  will  there mean  that  the networks  are subcritical as concerns their one-server nodes. 

\bigskip

{\bf Example 1.}
\vspace{2mm}

 Suppose here that, for all $N$, the set of indices $\{ 1, \dots , J(N)\}$ is the union of two disjoint sets $\mathcal J^1_N$ and $\mathcal J^2_N$, such that

\vspace{2mm}
 $ \bullet$ for $j \in  \mathcal J^1_N$, the random variable $X_{j,N}$ is geometric with parameter $\rho_{j,N} \in [0,1[$, 
 \[ i.e. \quad  \P(X_{j,N} = k ) = (1-\rho _{j,N})  \rho _{j,N}^k, \quad \text{for}\   k \in \N,\]

\vspace{1mm}
$ \bullet$  for $j \in  \mathcal J^2_N$, the random variable $X_{j,N}$ is Poisson with parameter $\lambda_{j,N}  >0$~,
 \[  i.e. \quad  \P(X_{j,N} = k ) = e^{-\lambda _{j,N}}   \frac {\lambda _{j,N}^k}{k!}, \quad \text{for} \  k \in \N.\]
 
The means and variances of the $X_{j,N}$'s and $S_N$'s are then given by
\begin{multline*}
 m_{j,N} = \left \{ \begin{array}{ll} \frac{\rho _{j,N}}{ 1-\rho _{j,N}}  & \text{ for } j\in \mathcal J_1^N \\  \lambda_{j,N}  & \text{ for } j\in \mathcal J_2^N \end{array},\right.  
     \qquad  \sigma^2_{j,N} =  \left \{ \begin{array}{ll} \frac{\rho _{j,N}}{ (1-\rho _{j,N})^2} & \text{ for } j\in \mathcal J_1^N \\  \lambda_{j,N} &  \text{ for } j\in \mathcal J_2^N  \end{array},\right.    \\ 
     \\   
     a_N =   \sum _{j \in \mathcal J_1^N} \frac {\rho _{j,N}}{ 1-\rho _{j,N}} +  \sum _{j \in \mathcal J_2^N} \lambda _{j,N}   \;\;   \text{ and } \;\;  b_N^2 =\sum _{j \in \mathcal J_1^N} \frac{\rho _{j,N}}{ (1-\rho _{j,N})^2} +  \sum _{j \in \mathcal J_2^N} \lambda _{j,N}  . 
 \end{multline*}
 
\smallskip

\begin{prop}\label{geompoisson}
 If the two following conditions are satisfied:

(i) there exists some $\rho <1$ such that  for all $ N\ge 1 $ and  $j \in \mathcal J^1_N$, $~\rho _{j,N} \le \rho$,

(ii) $\lim _{N \to \infty} b_N= + \infty$,

 then \eqref{tll} holds. 

\end{prop}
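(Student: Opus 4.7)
\vspace{2mm}\noindent\textbf{Proof plan.} The plan is to apply Theorem \ref{TLL} to $S_N$ by verifying its three hypotheses in turn.

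Hypothesis (i) is given by condition (ii). For hypothesis (iii) I would compute $|\E e^{itS_N}|$ from the independence and the explicit characteristic functions. The geometric factor $(1-\rho_j)^2/|1-\rho_j e^{it}|^2$ equals $1/(1 + 2\sigma_j^2(1-\cos t))$, while the Poisson factor equals $\exp(-2\sigma_j^2(1-\cos t))$ outright. Assumption (i) forces $\sigma_{j,N}^2 \le \rho/(1-\rho)^2$ uniformly for $j \in \mathcal J^1_N$, so each geometric factor is dominated by $\exp(-c_\rho \sigma_{j,N}^2(1-\cos t))$ via $1/(1+u) \le e^{-u/(1+U)}$ for $0 \le u \le U$. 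Multiplying all factors and using $\sum_j \sigma_{j,N}^2 = b_N^2$ gives $|\E e^{itS_N}| \le \exp(-c\, b_N^2(1-\cos t))$, and since $1-\cos t \ge 2t^2/\pi^2$ on $[-\pi,\pi]$, hypothesis (iii) holds with $\phi(u) = \exp(-c'' u^2) \in L^1(\R)$.

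For hypothesis (ii) the natural first attempt is to appeal to Theorem \ref{TCL} via Lyapunov's condition \eqref{Lyapunov} with $\delta = 1$. For the geometric summands this works cleanly, since (i) yields $\E|X_{j,N}-m_{j,N}|^3 \le C_\rho \sigma_{j,N}^2$. The Poisson summands are the obstruction: $\E|X_{j,N}-\lambda_{j,N}|^3$ is of order $\lambda_{j,N}^{3/2}$ when $\lambda_{j,N}$ is large, and a single Poisson of parameter $\sim b_N^2$ already defeats \eqref{Lyapunov} for every $\delta > 0$. I would bypass this by expanding the log characteristic function of each summand directly: from the explicit third derivative of $\log\phi_{j,N}(s)$ in the geometric case (bounded by $4\sigma_{j,N}^2/(1-\rho)^3$ using (i)) and from the identity $\log\psi_{j,N}(s) = \lambda_{j,N}(e^{is}-1)$ in the Poisson case (where the cubic Taylor error is elementary), one obtains the unified estimate
\[
\log \E e^{is(X_{j,N}-m_{j,N})} = -\frac{1}{2}\sigma_{j,N}^2 s^2 + R_{j,N}(s), \qquad |R_{j,N}(s)| \le C_\rho\, \sigma_{j,N}^2 |s|^3,
\]
valid for all $j,N$ and $|s| \le 1$. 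Summing and substituting $s = t/b_N$ produces $\log \E e^{it(S_N-a_N)/b_N} = -t^2/2 + O(|t|^3/b_N)$, which is $o(1)$ uniformly on $|t| \le A_N$ for any $A_N \to \infty$ with $A_N = o(b_N^{1/3})$. This establishes \eqref{uniform}, and Theorem \ref{TLL} then yields \eqref{tll}.

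The hard part is thus verifying hypothesis (ii) without a third-moment Lyapunov bound; once one observes that the cubic remainder of each summand is controlled by $\sigma_{j,N}^2$ (rather than by a potentially unbounded absolute third moment of a large Poisson), the rest is routine bookkeeping.
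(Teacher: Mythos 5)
Your proposal is correct, and it reaches condition (3) of Theorem \ref{TLL} by essentially the same computation as the paper (explicit moduli of the characteristic functions, then $\sin^2(t/2)\ge t^2/\pi^2$; your inequality $1/(1+u)\le e^{-u/(1+U)}$ is the same concavity-of-$\log$ bound the paper uses). Where you genuinely diverge is condition (2). You correctly identify that a single Poisson summand with parameter of order $b_N^2$ defeats the Lyapunov condition for every $\delta>0$, and you circumvent this by a direct cumulant expansion, $\log\E\,e^{is(X_{j,N}-m_{j,N})}=-\tfrac12\sigma_{j,N}^2s^2+O(\sigma_{j,N}^2|s|^3)$ (valid for the geometric summands thanks to (i) and for Poisson summands of arbitrary parameter), which after summation and the substitution $s=t/b_N$ gives \eqref{uniform} on any window $A_N=o(b_N^{1/3})$. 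The paper instead keeps the Lyapunov route but first performs a reduction you did not consider: each Poisson variable with $\lambda_{j,N}>1$ is rewritten as a sum of $\lfloor\lambda_{j,N}\rfloor+1$ independent Poissons of parameter $<1$, which leaves $S_N$, $a_N$ and $b_N$ unchanged and only enlarges $J(N)$; after this, $\E|X-m|^3\le\E(X^3)\le 5\sigma^2$ for every summand and Lyapunov with $\delta=1$ follows from $b_N\to\infty$. Both arguments are sound. The paper's splitting trick is the lighter one — it lets the generic Theorem \ref{TCL} apply verbatim and is reused without change in Proposition \ref{trunkgeompoisson} — while your expansion avoids any reduction, gives an explicit admissible width $A_N$, and makes transparent exactly why only the variances (and not higher absolute moments) need to be controlled.
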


\begin{rem}  $(i)$ and $(ii)$  of Proposition \ref{geompoisson} are not  necessary conditions: The Local Limit Theorem can  still hold  beyond  the subcritical phase.  It can be proved  for example  that if the number of indices
$j \in \mathcal J^N_1$ such that $ \rho_{j,N} = \max _{j \in \mathcal J_1^N} \rho _{j,N}$ goes to infinity  and $\ \liminf  \max _{j \in \mathcal J_1^N} \rho _{j,N} >0$, then equations \eqref {uniform} and \eqref {tll}  hold.
\end{rem}

\smallskip

{\bf Example 2.}
\vspace{2mm}

 Suppose now that, for all $N$, the set $\{ 1, \dots , J(N)\}$ is the union of two disjoint sets $\mathcal J^1_N$ and $\mathcal J^2_N$, such that
 
 \begin{itemize}
 \item for $j \in  \mathcal J^1_N$, the random variable $X_{j,N}$ has truncated geometric distribution with parameters $\rho_{j,N} \ge 0$ and $ c_{j,N} \in \N $, i.e. for $   k \in \{0, \dots , c_{j,N}\}$, 
 \[   \quad \P(X_{j,N} = k ) = \frac{ \rho _{j,N}^k} {\sum _{h=0}^{c_{j,N}}\rho _{j,N} ^h}  , \]
 \item  for $j \in  \mathcal J^2_N$, the random variable $X_{j,N}$ is Poisson with parameter $\lambda_{j,N} >0$ as in example 1.
\end{itemize}

The means and variances of  $S_N$'s are here given by
\[  a_N =   \sum _{j \in \mathcal J_1^N} m _{j,N} +  \sum _{j \in \mathcal J_2^N} \lambda _{j,N}   \qquad   \text{ and } \qquad   b_N^2 =\sum _{j \in \mathcal J_1^N} \sigma ^2 _{j,N}+  \sum _{j \in \mathcal J_2^N} \lambda _{j,N}, \]
where for $ j\in \mathcal J_1^N$,
 $\quad \displaystyle {m_{j,N} = \left \{ \begin{array}{ll} \frac{\rho _{j,N}}{ 1-\rho _{j,N}}  - (c_{j,N} +1)\frac{\rho _{j,N}^{1+c_{j,N}}}{ 1-\rho _{j,N}^{1+c_{j,N}}} & \text{ if } \rho _{j,N} \neq 1\\  \frac{c_{j,N}}{2}  & \text{ if }\rho_{j,N} =1\end{array}\right. } $ 
  
\[\text{ and} \qquad  \sigma^2_{j,N} =  \left \{ \begin{array}{ll} \frac{\rho _{j,N}}{ (1-\rho _{j,N})^2} - (c_{j,N} +1)^2\frac{\rho _{j,N}^{1+c_{j,N}}}{ \left (1-\rho _{j,N}^{1+c_{j,N}}\right )^2} & 
      \text{ if } \rho _{j,N} \neq 1\\  \frac{c_{j,N}(c_{j,N} +2)}{12}  & \text{ if }\rho_{j,N} =1
      \end{array}\right.    \]
 
\smallskip

\begin{prop}\label{trunkgeompoisson}
If the two following conditions are satisfied:

(i) there exists some $C < +\infty$ such that for all $ N \ge 1$ and $j \in \mathcal J^1_N$, $~c _{j,N} \le C,$ 

(ii) $\lim _{N \to \infty} b_N = + \infty$,

 then \eqref{tll} holds.  

\end{prop}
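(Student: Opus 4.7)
The plan is to apply the Local Limit Theorem \ref{TLL} to the sequence $(S_N)$. Its condition (i) is precisely assumption (ii) of the proposition, so the work reduces to verifying conditions (ii) and (iii).

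For condition (ii), I will apply Theorem \ref{TCL} after checking the Lyapunov condition \eqref{Lyapunov} with $\delta = 1$. For $j \in \mathcal{J}^1_N$, the bound $|X_{j,N} - m_{j,N}| \le c_{j,N} \le C$ immediately gives $\E|X_{j,N} - m_{j,N}|^3 \le C \sigma_{j,N}^2$. The Poisson terms resist a direct estimate because $\E|X - \lambda|^3$ grows like $\lambda^{3/2}$ for large $\lambda$, so I will decompose each Poisson$(\lambda_{j,N})$ into the sum of $\lfloor \lambda_{j,N} \rfloor$ independent Poisson$(1)$ variables and one Poisson$(\lambda_{j,N} - \lfloor \lambda_{j,N} \rfloor)$. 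The resulting enlarged family is still independent with the same sum $S_N$ (hence the same $a_N$ and $b_N$), and every Poisson summand in it has variance at most $1$ and third centered absolute moment bounded by an absolute constant, so that $\E|\cdot|^3 \le K \sigma^2_{\cdot}$ holds for every term of the enlarged family. Summing yields $\sum \E|X - m|^3 \le K' b_N^2 = o(b_N^3)$, which is the Lyapunov condition.

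For condition (iii), I will use independence to factor $|\E(e^{itS_N})| = \prod_j |\E(e^{itX_{j,N}})|$ and bound each factor on $[-\pi, \pi]$. The Poisson factors satisfy $|\E(e^{itX_{j,N}})|^2 = \exp(-4 \lambda_{j,N} \sin^2(t/2)) \le \exp(-4 \sigma_{j,N}^2 t^2/\pi^2)$, using $|\sin(t/2)| \ge |t|/\pi$ on $[-\pi, \pi]$. For the truncated geometric factors, letting $p_k = \P(X_{j,N} = k)$, the starting point is the identity
\[1 - |\phi_j(t)|^2 = 2 \sum_{k < l} p_k p_l \bigl(1 - \cos(t(l-k))\bigr);\]
retaining only the nearest-neighbor pairs $(k, k+1)$ and invoking $1 - \cos u \ge 2 u^2/\pi^2$ on $[-\pi, \pi]$ gives $1 - |\phi_j(t)|^2 \ge (4 t^2/\pi^2) \sum_{k=0}^{c_{j,N} - 1} p_k p_{k+1}$. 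The key step is then to show that $\sum_k p_k p_{k+1} \ge \gamma(C) \sigma_{j,N}^2$ for some $\gamma(C) > 0$, uniformly over $\rho_{j,N} \in [0, \infty)$ and $c_{j,N} \in \{1, \ldots, C\}$. Combining both bounds with $1 - x \le e^{-x}$ yields $|\E(e^{itS_N})|^2 \le \exp(-c \, b_N^2 t^2)$ on $[-\pi, \pi]$, so the Gaussian $\phi(u) = \exp(-c u^2 / 2) \in L^1(\R)$ does the job.

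The main obstacle is precisely this uniform lower bound $\sum_k p_k p_{k+1} \ge \gamma(C) \sigma_{j,N}^2$ for the truncated geometric distributions. Continuity yields it on any compact subinterval of $(0, \infty)$ on which $\sigma^2 > 0$; the delicate limits $\rho \to 0$ and $\rho \to \infty$ require checking, via an explicit expansion of the probability mass function, that both sides vanish at the same linear order in $\rho$ (respectively $1/\rho$), after which taking an infimum over $c \in \{1, \ldots, C\}$ provides the absolute constant $\gamma(C)$.
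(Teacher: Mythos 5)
Your proposal is correct and follows essentially the same route as the paper: Lyapunov's condition with $\delta=1$ after splitting large-parameter Poisson variables into small pieces, and domination of the characteristic function via a uniform lower bound of an adjacent-pair functional of the truncated geometric law by a constant multiple of its variance, established by continuity on compacts plus the boundary limits $\rho\to 0$ and $\rho\to\infty$. The only cosmetic difference is that you rederive the Gnedenko-type inequality directly from the identity for $1-|\phi(t)|^2$ using $\sum_k p_k p_{k+1}$, whereas the paper cites Gnedenko's Lemma \ref{gnedenko} with the functional $s=\sum_l p_{2l}p_{2l+1}/(p_{2l}+p_{2l+1})$ and halves the limit analysis via the duality $X_{\rho,c}\sim c-X_{\rho^{-1},c}$.
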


The proof of this proposition (see Appendix) requires  uniform domination of the  characteristic functions of truncated geometric variables. This is stated as  Lemma \ref{domination-Gnedenko}, which itself relies 
on the following lemma, due to Gnedenko \cite[p.~192-3]{Gnedenko-1}. 
\smallskip
\begin{lemma} \label{gnedenko} {\it (Gnedenko)}
Let $X$ be an integer-valued random variable. Denote $p_n= P(X=n)$ for $n \in \Z$ and  
 \[ s = \sum _{l \in \Z} \frac{p_{2l}p_{2l+1}}{p_{2l}+p_{2l+1}}, \]
  with the convention that  $\frac{p_{2l}p_{2l+1}}{p_{2l}+p_{2l+1}} =0\ $ if $\ p_{2l}  =p_{2l+1}= 0$.
  Then, the following inequality holds
\begin{equation} \label{domination}
 |\, \E(e^{itX}) \,  | ~\le~ e^{-\frac{2}{\pi ^2} s  t^2} \qquad \text{ for }  \  |t| \le \pi.
  \end{equation} 
\end{lemma}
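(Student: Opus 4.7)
The plan is to pair consecutive integers $2l$ and $2l+1$ and bound the contribution of each pair to the modulus of the characteristic function $\phi(t) := \E(e^{itX})$.

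First I would regroup terms by writing
\[ \phi(t) = \sum _{l \in \Z} e^{2ilt}\left(p_{2l} + p_{2l+1}\, e^{it}\right), \]
so that the triangle inequality gives $|\phi(t)| \le \sum_{l} |p_{2l} + p_{2l+1}\, e^{it}|$. The key computation is then the expansion
\[ |p_{2l} + p_{2l+1}\, e^{it}|^{2} = p_{2l}^{2} + p_{2l+1}^{2} + 2 p_{2l} p_{2l+1} \cos t = (p_{2l} + p_{2l+1})^{2} - 2 p_{2l} p_{2l+1}(1 - \cos t). \]
Factoring out $(p_{2l} + p_{2l+1})^2$ inside the square root and applying $\sqrt{1-x} \le 1 - x/2$ (valid for $x \in [0,1]$, and both sides being zero whenever $p_{2l}+p_{2l+1}=0$, in agreement with the stated convention) yields
\[ |p_{2l} + p_{2l+1}\, e^{it}| \le (p_{2l} + p_{2l+1}) - \frac{p_{2l}\,p_{2l+1}}{p_{2l} + p_{2l+1}} (1 - \cos t). \]

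Summing over $l \in \Z$ and using $\sum_l (p_{2l} + p_{2l+1}) = 1$, I obtain
\[ |\phi(t)| \le 1 - s\,(1 - \cos t). \]
To conclude I would invoke the elementary inequality $1 - \cos t \ge \frac{2}{\pi^{2}} t^{2}$ valid on $[-\pi, \pi]$ (both sides vanish at $0$ and $\pm \pi$, and the difference $f(t) = 1 - \cos t - 2t^{2}/\pi^{2}$ is easily seen to be nonnegative on $[-\pi,\pi]$ by checking its sign via $f(0)=f(\pm\pi)=0$, $f'(0)=0$ and $f''(0) = 1 - 4/\pi^2 > 0$), together with $1 - x \le e^{-x}$, to derive
\[ |\phi(t)| \le 1 - \frac{2s}{\pi^{2}} t^{2} \le \exp\!\left(-\frac{2s}{\pi^{2}} t^{2}\right) \quad \text{for } |t| \le \pi. \]

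The proof is short and essentially algebraic; the only slightly delicate points are the handling of pairs $(2l, 2l+1)$ with zero total mass (covered by the convention in the statement) and the sharpness of the trigonometric inequality $1 - \cos t \ge 2t^2/\pi^2$, which must hold on the full interval $[-\pi,\pi]$ so that the constant $2/\pi^2$ appears in the conclusion. There is no real obstacle beyond these bookkeeping points.
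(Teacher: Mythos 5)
Your proof is correct. Note that the paper itself does not prove this lemma: it is stated with a citation to Gnedenko's 1948 paper, so there is no internal argument to compare against; your pairing of the probabilities $p_{2l}$ and $p_{2l+1}$, the identity $|p_{2l}+p_{2l+1}e^{it}|^2=(p_{2l}+p_{2l+1})^2-2p_{2l}p_{2l+1}(1-\cos t)$, and the bound $\sqrt{1-x}\le 1-x/2$ reconstruct exactly the classical argument. Two small points deserve tightening. First, to apply $\sqrt{1-x}\le 1-x/2$ you should note explicitly that $x=2p_{2l}p_{2l+1}(1-\cos t)/(p_{2l}+p_{2l+1})^2\le 1$, which follows from $4ab\le(a+b)^2$ and $1-\cos t\le 2$. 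Second, your justification of $1-\cos t\ge 2t^2/\pi^2$ on $[-\pi,\pi]$ is incomplete as written: checking the values and derivatives at $t=0$ and $t=\pm\pi$ does not by itself exclude a dip below zero in the interior. The clean route, which is also the one the paper uses elsewhere (in the proof of Proposition \ref{geompoisson}), is Jordan's inequality $|\sin(t/2)|\ge|t|/\pi$ for $|t|\le\pi$ (by concavity of $\sin$ on $[0,\pi/2]$), combined with $1-\cos t=2\sin^2(t/2)$. With these repairs the argument is complete.
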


\medskip
Now  for any  $\rho >0$ and  $c \in \N$,  let  $m_{\rho, c}$ and $\sigma ^2_{\rho , c} $ denote the mean and variance of a truncated geometric random variable  $X_{\rho, c}$ with parameters $\rho$ and $c$, and  let $s _{\rho, c}$ denote the associated parameter as defined in Lemma \ref{gnedenko}.  The following lemma is easily derived from Lemma \ref{gnedenko}.

\begin{lemma}\label{domination-Gnedenko} 
For any $C \ge 1$, there exists some $\kappa>0$  such that for any  $\rho > 0$ and any  $c \in \{0, \dots , C\}$,
\[  |\, \E(e^{itX_{\rho, c}}) \,  | ~\le~ e^{-\frac{2}{\pi ^2} \kappa \sigma_{\rho, c}  ^2  t^2} \qquad \text{ for }  \  |t| \le \pi.\]
\end{lemma}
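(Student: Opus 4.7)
The plan is to apply Gnedenko's Lemma~\ref{gnedenko}, reducing the task to showing that $s_{\rho,c}/\sigma^2_{\rho,c}$ is bounded below by a positive constant $\kappa$ depending only on $C$. The case $c=0$ is trivial since then $X_{\rho,0}\equiv 0$ and both sides of the claimed inequality equal $1$, so fix $c\in\{1,\dots,C\}$.

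The key step is a symmetry observation. The variable $c - X_{\rho,c}$ has the distribution of $X_{1/\rho,c}$, as one checks directly from $p_k \propto \rho^k$. Consequently
\[
\sigma^2_{\rho,c} \,=\, \sigma^2_{1/\rho,c}
\qquad\text{and}\qquad
|\E(e^{itX_{\rho,c}})| \,=\, |\E(e^{itX_{1/\rho,c}})|,
\]
so one may apply Gnedenko's inequality to whichever of the parameters $\rho$ and $1/\rho$ is more favourable. This reduction is essential: a direct application of Gnedenko's inequality fails when $c$ is even and $\rho$ is large, because then the dominant mass sits at the unpaired element $c$ in the partition $\{(2l,2l+1)\}$ of $\Z$, and an explicit calculation gives $s_{\rho,c}/\sigma^2_{\rho,c}\to 0$ as $\rho\to\infty$.

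Having the symmetry in hand, we may assume $\rho\in(0,1]$, and it suffices to show that for each $c\in\{1,\dots,C\}$,
\[
\kappa_c \,:=\, \inf_{\rho\in(0,1]} \frac{s_{\rho,c}}{\sigma^2_{\rho,c}} \,>\, 0,
\]
and then take $\kappa=\min_{1\le c\le C}\kappa_c$. The maps $\rho\mapsto s_{\rho,c}$ and $\rho\mapsto\sigma^2_{\rho,c}$ are continuous on $(0,\infty)$ (the formulas at $\rho=1$ extending by continuity the $\rho\neq 1$ expressions given in Example~2) and strictly positive for $c\ge 1$. A short expansion as $\rho\to 0^+$ shows that only the pair $(p_0,p_1)$ contributes at leading order, giving $s_{\rho,c}=\rho+O(\rho^2)$, and likewise $\sigma^2_{\rho,c}=\rho+O(\rho^2)$. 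Hence the ratio extends continuously to $[0,1]$ with value $1$ at the endpoint $\rho=0$, and by continuity on a compact set attains a positive minimum $\kappa_c>0$.

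The main obstacle is recognizing that Gnedenko's parameter $s_{\rho,c}$ is not invariant under $\rho\leftrightarrow 1/\rho$, unlike $\sigma^2_{\rho,c}$: the pairing $(2l,2l+1)$ is not preserved by the involution $k\mapsto c-k$ when $c$ is even, which is the sole source of difficulty, and it is resolved by the reflection trick above. Once the reduction to $\rho\le 1$ is made, only a routine continuity-and-compactness argument remains, with the single verification being the limit at $\rho\to 0^+$.
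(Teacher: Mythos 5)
Your proof is correct and follows essentially the same route as the paper's: the reflection $X_{\rho,c}\sim c-X_{1/\rho,c}$ to reduce to $\rho\le 1$, then Gnedenko's inequality combined with continuity and positivity of $s_{\rho,c}/\sigma^2_{\rho,c}$ on $(0,1]$ and control of its behaviour as $\rho\to 0^+$ (the paper bounds the $\liminf$ below by $1/2$ via an explicit inequality for $s_{\rho,c}$, where you identify the limit as $1$; both suffice). Your added observation that the reduction is genuinely necessary for even $c$ --- because the pairing $(2l,2l+1)$ is not preserved by $k\mapsto c-k$ and $s_{\rho,c}/\sigma^2_{\rho,c}\to 0$ as $\rho\to\infty$ --- is accurate and a nice complement to the paper, which uses the symmetry without comment.
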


\begin{proof} It is enough to consider the case when $0 <\rho \le 1$, since $ \ \sigma ^2_{\rho^{-1},c}=\sigma ^2_{\rho , c}$ and 
\[  |\, \E(e^{itX_{\rho, c}}) \,  | = |\, \E(e^{it(c-X_{\rho ^{-1}, c}}) \, |  = |\, \E(e^{-it X_{\rho ^{-1}, c}}) \, |  \] 
 due to the following elementary remark.
 
\begin{rem}\label{symmetry}
For all $c\in \N$ and $\rho\in ]0,+\infty[$, the random variable $  X_{\rho, c}$ has the same distribution as  $c-X_{\rho^{-1},c}$.
   As a consequence, the following symmetries hold for any $c \in \N$ and $\rho > 0$:  
   \[m_{\rho^{-1}, c} = c-m_{\rho, c} \qquad \text{  and  } \qquad \sigma ^2_{\rho^{-1},c}=\sigma ^2_{\rho , c} . \]
This induces a duality  property  between occupied and empty rooms in the finite capacity systems to come,  known as the \emph {particles-holes duality  (see \cite{Kipnis-1})}.
   \end{rem}

   So, it is now enough to prove existence of some positive $\kappa $ such that 
   \[ s_{\rho, c} \ge \kappa \sigma^2_{\rho, c}\]
for all $c \in \{1, \dots , C\}$ and $0 <\rho \le 1$.  (The case $c=0$ needs not be considered, since $s_{0,\rho}= \sigma ^2_{0, \rho} =0$ for all $\rho$.) 

It is clear that for fixed $c \ge 1$, both mappings  $\rho \mapsto s _{ \rho, c}$ and $\rho \mapsto \sigma^2_{\rho, c}$ are positive valued and continuous on the interval $]0, 1]$. It is then enough to prove that,  
  for $   c= 1, \dots , C$, 
$$ \liminf _{ \rho \to 0} s _{ \rho, c} / \sigma^2_{\rho, c} >0. $$
Now from the definition of $s _{\rho, c}$, one gets (using $\rho \le 1$ for the second inequality)
\begin{align*}
  s _{\rho, c} &= \left (\sum _{l=0}^c \rho ^l \right )^{-1} \left (\sum _{0 \le 2l \le c-1}  \frac { \rho ^{4l+1}}{ \rho ^{2l} + \rho ^{ 2l+1}} \right ) \\
&~\ge~\frac{ \rho}{2} \left (\sum _{0 \le 2l \le c-1} \rho ^{2l}\right ) \left (\sum _{l=0}^c \rho ^l \right )^{-1} ~\ge~\frac{ \rho}{2} \left (\sum _{l=0}^c \rho ^l \right )^{-1}.
\end{align*}
Together with $\sigma^2_{\rho, c}  = \frac{\rho}{ (1-\rho )^2} - (c +1)^2\frac{\rho ^{c+1}}{ \left (1-\rho ^{c+1}\right )^2} $
if $\rho<1$, we obtain
\[ \liminf _{ \rho \to 0} \frac{s _{ \rho, c} }{ \sigma^2_{\rho, c}}  \ge \liminf _{ \rho \to 0} \left [ 2 \left (\sum _{l=0}^c \rho ^l \right ) \left ( \frac{1}{ (1-\rho )^2} - (c +1)^2\frac{\rho ^{c}}{ \left (1-\rho ^{c+1}\right )^2} \right )  \right ]^{-1} =~ \frac{1}{2}. \] 
\end{proof}

\section{Generalized closed Jackson networks}\label{sec3}

In accordance with the terminology of Serfozo~\cite{Serfozo-1}, a standard Jackson network consists of a finite number of nodes at which customers are successively served, according to possibly different procedures. Nodes operate independently, which means  that  the  rate at which customers leave a node only  depends on the  current  occupation of this node. When a customer completes his service at  some node, he  instantly moves to another node for another service. New nodes are chosen at random  according to a fixed routing matrix. All service processes and  routings are independent. For a \emph {closed} Jackson network,  which is the case considered in this paper, customers stay forever in the system and there are no external arrivals.

In this description all nodes have unlimited capacity. This dynamics can be generalized to a system including   nodes with finite capacities. But the transition rule needs then  be state dependent  (constrained by the current location of the free space). It can no longer be given by some fixed matrix. Different blocking or rerouting policies can be considered.

\subsection{Infinite capacity model} \label{classicalJackson}
First  consider the case when all node  capacities are infinite. Denote by $N$ the number of nodes, by $M$ the fixed number of customers and by $P= (p_{ij})_{1\le i,j \le N}$ the routing matrix (that is, $p_{ij}$ is the probability that a customer leaving node $i$ moves to node $j$).  $P$  is assumed   irreducible, which ensures uniqueness of its associated invariant distribution $\theta= (\theta _1, \dots , \theta _N)$ on $\{ 1, \dots , N\}$.  For each node $i$, the departure rate of customers from node $i$ is $g_i(k)$ when  $k$ customers are present at $i$. Here $g_i$ is some function defined on $\N$ such that $g_i(0)=0$ and $g_i(k) >0$ for $k \ge 1$.

 The node-occupation process is Markov with state space 
 \[ \mathcal S_{N,M} =  \left \{ (n_1, \dots , n_N) \in \N^N \quad \text{  such that } \quad  \sum _{j=1} ^N n_j = M \right \}, \]
and transitions
\[ n \longrightarrow n-e_i+e_j \quad \text{ at rate } \quad q(n,n-e_i+e_j ) = g_i(n_i) p_{ij},\]
for $n \in \mathcal S_{N,M}$ and $ \ 1 \le i,j \le N$. Here $e_i$ denotes the $i^{th}$ unit vector. 

 A well known and  remarkable feature of this class of processes is that  the joint stationary distribution of the queues lengths $\xi _1, \dots , \xi _N$ is explicitly known and has  product form, given  by
 \begin{equation}\label{invariant}
 \P(\xi _1= n_1, \dots , \xi _N =n_N) =  \frac{1}{Z} \prod _{j=1} ^N \frac{\theta_{j}^{n_j}}{(g_j !) (n_j)} \qquad \text{ for } (n_1, \dots , n_N)  \in \mathcal S _{N,M} ,
 \end{equation}
 where $Z$ is a normalizing constant and for $j = 1, \dots , N$, the function $(g_j !) $ is defined  on  $ \N$ by
 \[  (g_j !) (0)  = 1 \quad \text { and } \quad (g_j ! )(n) = \prod _{k=1}^n g_j  (k) \quad \text{ for } n \ge 1. \]

\subsection{Finite capacity models} \label{subsec3.2}
 If in the contrary some nodes have finite capacities, denote  $c_i$ the capacity of node $i$ ($0 \le c_i \le + \infty $ for $i = 1, \dots , N$) and  $c$ the vector $(c_1, \dots , c_N)$.   The state space is  now changed for
  \[ \mathcal S^{c}_{N,M} =  \left \{ (n_1, \dots , n_N) \in \N^N  \  : \ \   \sum _{j=1} ^N n_j = M    \quad \text{and}   \quad   n_j \le c_j \   \text { for }  c_j <+ \infty 
  \right \}, \]
  which is non empty provided that  $ \  \displaystyle {\sum _{j=1} ^N c_j  \ge  M  }$ .
 Several  dynamics are considered.
 
\medskip 
{\it 1 - Model with blocking.}

\smallskip 
The dynamics is the same as in the  previous infinite capacity case, except that when node $j$ is at capacity, any move  from some node $i$ to node $j$ is cancelled   (that is, the customer involved renews service at node $i$). 

 In other terms, the transitions are given by
\[ n \longrightarrow n-e_i+e_j \quad \text{ at rate } \quad q(n,n-e_i+e_j ) = g_i(n_i) p_{ij}  {\bf 1} _{n_j< c_j},\]
for $n \in \mathcal S^{c}_{N,M}$ and $ \ 1 \le i,j \le N$.

  The infinite capacity model can be seen as a particular case of this finite capacity model with blocking.  But the explicit formula \eqref{invariant} for the invariant distribution does not extend, in general, to the finite capacity model with blocking. It is the case, yet,  if the routing matrix $P$ is \emph {reversible} with respect to its invariant measure $\theta$, that is, if
   \begin{equation} \label{reversible} \theta _i p_{ij} = \theta _j p_{ji}  \quad \text{ for } 1 \le i,j \le N . 
   \end{equation}
 If \eqref{reversible} is satisfied, then  \eqref{invariant} still holds here for $(n_1, \dots , n_N)  \in \mathcal S^{c} _{N,M}$ (see \cite{Serfozo-1}).
 
  \bigskip 
{\it 2 - Model with blocking and rerouting.}

 \smallskip
 This is a particular case of a dynamics  introduced by Economous and Fakinos \cite{Economou-1}. Here, any customer routed - according to the matrix $P$ - to some saturated node  will move instantly (i.e., at infinite speed)  across the system, still according to $P$, up to find some non saturated node where to settle for a new service. Note that this customer will certainly find such a node, possibly the actual node he has just leaved (where one unit of capacity is  thus available). Of course, in this case, no transition occurs. 
 
 The corresponding  process has the following transitions and rates
\begin{equation} \label{rates}  n \longrightarrow n-e_i+e_j \quad \text{ at rate } \quad q(n,n-e_i+e_j ) = g_i(n_i) p^*_{ij}(n) ,
\end{equation}
for $n \in \mathcal S^{c}_{N,M}$ and $ \ 1 \le i \ne j \le N$,  where $ p^*_{ij}(n)$ denotes the probability that a Markov chain with transition matrix $P$ initiated at $i$ enters the   set $A_i(n) $ at $j$, for
\[ A_i(n)  \stackrel{def}{=} \{ j, \ 1 \le j \le N, \quad j=i \quad \text{or} \quad n_j <c_j \}. \]

Contrary to the simple blocking case where $P$ has to be assumed reversible,  here the product form \eqref{invariant} holds  \emph {without any restriction on the routing matrix $P$}.  This dynamics thus appears as the appropriate generalization of the standard - infinite capacity - Jackson dynamics (in which case,  $ p^*_{ij}(n) =  p_{ij}$ for all $n,i,j$).

\begin{theorem} \label{Economou}  (Economou and Fakinos).
For any irreducible routing matrix $P$, the Markov process with state space $\ S^{c} _{N,M}$ and transition rates given by \eqref{rates} has invariant distribution  given by  \eqref{invariant}  for $(n_1, \dots , n_N)  \in \mathcal S^{c} _{N,M}$.
\end{theorem}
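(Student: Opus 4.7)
The plan is to verify the node (partial) balance equations of the process for the candidate measure $\pi$ given by \eqref{invariant}: I will show that, for each node $i \in \{1, \dots, N\}$ and each state $n \in \mathcal S^{c}_{N,M}$, the total outgoing rate from $n$ due to customers departing node $i$ equals the total incoming rate into $n$ due to customers arriving at node $i$. Summing these identities over $i$ gives the global balance equation for the generator of \eqref{rates}, and hence establishes invariance of $\pi$.

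From the rates \eqref{rates}, the outgoing flow at node $i$ out of $n$ equals $\pi(n)\, g_i(n_i) \bigl(1 - p^*_{ii}(n)\bigr)$, since $\sum_{j \ne i} p^*_{ij}(n) = 1 - p^*_{ii}(n)$; the incoming flow at $i$ into $n$ is a sum over predecessor states $n' = n + e_k - e_i$ (with $k \ne i$, $n_k < c_k$ and $n_i \ge 1$) of terms $\pi(n')\, g_k(n_k + 1)\, p^*_{ki}(n')$. Using the product form to compute
\begin{equation*}
\frac{\pi(n + e_k - e_i)}{\pi(n)} \;=\; \frac{\theta_k \, g_i(n_i)}{\theta_i \, g_k(n_k + 1)}
\end{equation*}
and cancelling the service rates, I will reduce the node balance at $i$ to the identity
\begin{equation*}
\theta_i\, \bigl(1 - p^*_{ii}(n)\bigr) \;=\; \sum_{k \ne i,\, n_k < c_k} \theta_k \, p^*_{ki}(n + e_k - e_i),
\end{equation*}
which involves only the routing matrix $P$ and its invariant measure $\theta$.

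The key observation to prove this reduced identity is that, for every $k \ne i$ with $n_k < c_k$ (and $n_i \ge 1$), the two sets $A_k(n + e_k - e_i)$ and $A_i(n)$ coincide and equal
\begin{equation*}
A \;:=\; \{i, k\} \cup \{\ell \ne i, k : n_\ell < c_\ell\}.
\end{equation*}
Consequently, the probabilities $p^*_{ki}(n + e_k - e_i)$ for $k \in A \setminus \{i\}$, together with $p^*_{ii}(n)$, are precisely the entries $\tilde P^A_{ki}$ of the trace (or embedded) Markov chain on $A$ associated with $P$, namely $\tilde P^A_{ki} = \P_k(X_\sigma = i)$, where $X$ has transition matrix $P$, $X_0 = k$, and $\sigma = \inf\{t \ge 1 : X_t \in A\}$. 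A standard application of the strong Markov property (using irreducibility of $P$ and invariance of $\theta$) then gives that $\theta|_A$ is invariant for $\tilde P^A$, i.e.\ $\theta_i = \sum_{k \in A} \theta_k\, \tilde P^A_{ki}$, which rearranges to the required identity. The main obstacle lies in the bookkeeping behind the set-equality $A_i(n) = A_k(n + e_k - e_i)$ and in the correct interpretation of $p^*$ as a first-return probability (at time $\ge 1$) to this common set $A$; once these points are settled, the proof reduces to the classical invariance of $\theta|_A$ for the trace chain.
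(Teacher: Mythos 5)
Your proof is correct and follows essentially the same route as the paper: the paper obtains Theorem \ref{Economou} as the special case of Theorem \ref{state_dependent} in which $P(m)$ is the trace chain of $P$ on $A(m)$, and its verification of global balance reduces, node by node, to exactly your identity $\theta_i\bigl(1-p^*_{ii}(n)\bigr)=\sum_{k}\theta_k\,p^*_{ki}(n+e_k-e_i)$, settled by the same classical invariance of $\theta$ restricted to a subset under the embedded chain. Your explicit bookkeeping that $A_k(n+e_k-e_i)=A_i(n)$ is handled in the paper implicitly by indexing the state-dependent routing matrix by the intermediate configuration $m=n-e_i$.
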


 \bigskip 
{\it 3 - Other state-dependent routings.}

 \smallskip
The scheme introduced in  \cite{Economou-1} is slightly more general than the blocking and rerouting dynamics just described. In the general setting of \cite{Economou-1}, the set $A_i(n)$ - of nodes that are below capacity when some customer has just left  node $i$ - is splitted into disjoints blocks. A partition is thus associated with each configuration $n-e_i$ of $M-1$ customers over the $N$ nodes. The just served customer  at $i$ still explores the network at infinite speed according to some Markov chain with transition $P$, but    settles at the first visited node  that  \emph {belongs to the same block} $B_{n-e_i}(i)$  as $i$. The same product form stationary distribution then holds, given by  \eqref{invariant}.

This result can be generalized to a larger class of state-dependent routings. We consider the following  transitions and rates:
\begin{equation} \label{newrates}  n \longrightarrow n-e_i+e_j \quad \text{ at rate } \quad q(n,n-e_i+e_j ) = g_i(n_i) p_{ij}(n-e_i) \, {\bf 1}_{n_i >0,\, n_j <c_j} ,
\end{equation}
for $n \in \mathcal S^{c}_{N,M}$ and $ \ 1 \le i \ne j \le N$,  where   a transition matrix  $ P(m) =(p_{ij}(m)) _{ i,j \in A(m)}$ on the set
\[ A(m)  \stackrel{def}{=} \{ j, \ 1 \le j \le N, \quad  m_j <c_j \} \]
is associated with each $m \in \mathcal S^{c}_{N,M-1}$.

\begin{theorem} \label{state_dependent} 
Assume that some positive vector $\theta = (\theta _1, \dots , \theta_N)$ is such that
\[ \forall m \in  \mathcal S^{c}_{N,M-1},  \quad \forall j \in \{1, \dots , N\} \quad \sum _{i \in A(m)} \theta _i p_{ij}(m) = \theta _j.\]
then the Markov process with state space $\mathcal S^{c} _{N,M}$ and transition rates given by \eqref{newrates} has invariant distribution  given by  \eqref{invariant}  for $(n_1, \dots , n_N)  \in \mathcal S^{c} _{N,M}$.
\end{theorem}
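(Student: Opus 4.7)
The plan is to verify the global balance equations for the claimed stationary distribution
\[ \pi(n) = \frac{1}{Z} \prod_{j=1}^N \frac{\theta_j^{n_j}}{(g_j!)(n_j)}, \qquad n \in \mathcal S^{c}_{N,M}, \]
by showing that for each state $n$ the total outflow rate equals the total inflow rate. The outflow is read directly from \eqref{newrates} as $\pi(n)\sum_{i \neq j,\, n_i > 0,\, n_j < c_j} g_i(n_i)\, p_{ij}(n-e_i)$. For the inflow, the predecessors of $n$ in one step are the states $n' = n + e_i - e_j$ (with $i \neq j$, $n_i < c_i$, $n_j \geq 1$), each contributing $\pi(n+e_i-e_j)\,g_i(n_i+1)\,p_{ij}(n-e_j)$.

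The key computation is the ratio
\[ \frac{\pi(n + e_i - e_j)}{\pi(n)} = \frac{\theta_i\, g_j(n_j)}{\theta_j\, g_i(n_i+1)}, \]
obtained from the explicit product form of $\pi$; plugging this in cancels the factor $g_i(n_i+1)$ on the inflow side. I would then relabel the summation indices $(i,j) \leftrightarrow (j,i)$ on the inflow so that on both sides the outer index $i$ is the departure node and the inner $j$ is the arrival node. Global balance is then reduced to the per-$i$ identity, for each $i$ with $n_i > 0$,
\[ \theta_i \sum_{j \neq i,\, n_j < c_j} p_{ij}(n - e_i) = \sum_{j \neq i,\, n_j < c_j} \theta_j\, p_{ji}(n - e_i). \]

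To establish this identity I would specialize the hypothesis to $m = n - e_i \in \mathcal S^{c}_{N,M-1}$. Since $(n - e_i)_i = n_i - 1 < c_i$, one has $i \in A(n-e_i)$, and in fact $A(n-e_i) = \{i\} \cup \{k \neq i : n_k < c_k\}$. Taking the destination $j = i$ in the invariance condition gives
\[ \theta_i\, p_{ii}(n - e_i) + \sum_{k \neq i,\, n_k < c_k} \theta_k\, p_{ki}(n - e_i) = \theta_i. \]
Combined with the row-sum relation $p_{ii}(n - e_i) + \sum_{j \neq i,\, n_j < c_j} p_{ij}(n - e_i) = 1$ (the fact that $P(n - e_i)$ is stochastic on $A(n - e_i)$), this yields exactly the per-$i$ identity. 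Summing over $i$ with $n_i > 0$ then gives global balance: terms with $n_i = 0$ contribute nothing to the outflow since $g_i(0) = 0$, and are absent from the inflow after relabeling.

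The main obstacle is the careful bookkeeping of indices and indicator constraints on the inflow side. In particular, one must recognize that the hypothesis has to be applied at $m = n - e_i$ with destination equal to the departure node $i$ itself, so that the diagonal entry $p_{ii}(n - e_i)$ — the probability of "self-rerouting", which does not correspond to an actual transition of the chain — can be split off and absorbed by stochasticity. This is precisely the point where the state-dependent generality goes beyond the classical Jackson argument, in which the analogue of $p_{ii}$ is simply zero.
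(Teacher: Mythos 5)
Your proof is correct and follows essentially the same route as the paper: write out global balance, compute the ratio $\pi(n\pm e_i \mp e_j)/\pi(n)$ to cancel the service rates, use row-stochasticity of $P(n-e_i)$ on $A(n-e_i)$ to express the outflow per departure node $i$ as $g_i(n_i)\bigl(1-p_{ii}(n-e_i)\bigr)$, and apply the invariance hypothesis at $m=n-e_i$ with destination $i$ to match the inflow. The only difference is cosmetic (your labeling of the predecessor states as $n+e_i-e_j$ before relabeling, versus the paper's $n-e_i+e_j$), and your identification of the diagonal term $p_{ii}(n-e_i)$ as the crux is exactly the paper's key step.
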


The dynamics considered in \cite{Economou-1} are of  the form described in \eqref {newrates}, with line $i$ of matrix $P(m)$  then given, for all $m \in \mathcal S^{c}_{N,M-1}$  and $i \in A(m)$, by the distribution of the return point to $B_{n-e_i}(i)$ of a Markov chain  with transition matrix $P$ initiated at $i$. The condition of Theorem  \ref{state_dependent} is then satisfied for  the invariant vector $\theta$ of $P$, due to the well-known following result:  For any ergodic Markov chain on $\{1, \dots ,N \}$ with invariant vector $(\theta _i)_{1 \le i \le N}$, and for any subset $B$ of $\{1, \dots ,N \}$, the embedded Markov chain at times of visits to $B$  has invariant vector   $(\theta  _i)_{i \in B}$.

\begin{proof}  We check  that the global balance equations 
\begin{align}\label{balance}
 \sum_{\substack{ 1\le i \ne j \le N \\ n_i >0, n_j <c_j}} g_i(n_i)   p_{ij}(n-e_i)  = \!  \sum_{\substack{ 1\le i \ne j \le N, \\    n_i>0 , \, n_j <c_j} } \! g_j(n_j+1)  p_{ji}(n -e_i)  \frac{\pi(n-e_i+e_j)}{ \pi (n) } ,
\end{align} 
 are satisfied for  $n \in \mathcal S^{c}_{N,M}$, where  $\pi (n) =  Z^{-1} \prod _{j=1} ^N \theta_{j}^{n_j}/(g_j !) (n_j)$.

Recall that $g_i(n_i)=0$  if $n_i=0$, so that the left-hand side of  \eqref{balance} is equal to 
\[  \sum_{i=1}^ N g_i(n_i)  \left ( 1- p_{ii}(n-e_i) \right ), \]
while, using  $ \pi(n-e_i+e_j) / \pi (n) = \theta _j g_i(n_i) / (\theta  _i g_j(n_j+1))$,  the right-hand side rewrites
\[  \sum_{\substack{ 1\le i \ne j \le N\\   n_j <c_j }} \frac{\theta  _j}{ \theta  _i} g_i(n_i)  p_{ji}(n -e_i)  =   \sum_{i=1}^N  \frac{g_i(n_i) }{ \theta  _i}   \sum_{  j \ne i,  \, n_j <c_j }  \theta  _j  p_{ji}(n-e_i) \]
 Now equation \eqref {balance}  results from the following equality: for all $n \in \mathcal S^{c}_{N,M}$ and $i \in \{1, \dots , N \}$ such that $n_i >0$, 
  \[  \theta  _i  \left ( 1- p_{ii}(n-e_i)  \right ) =   \sum_{  j \ne i,  \, j \in A(n-e_i) }  \theta  _j  p_{ji}(n-e_i), \]
 or equivalently (adding $\theta  _i   p_{ii}(n-e_i)$ to both members)
  \[  \theta  _i   =   \sum_{  j \in A(n-e_i) }  \theta  _j  p_{ji}(n-e_i), \]
  which is satisfied by assumption on vector $\theta$.
 
 \end{proof}

 \begin{rem}\label{uptoaconstant} 
 It will next be crucial to note that  due to the particular form of the constraint $\sum _{j=1} ^N n_j = M$, the expression   in \eqref{invariant},   for $(n_1, \dots , n_N) $ in $\mathcal S _{N,M}$ or  $\mathcal S^{c} _{N,M}$,  is unchanged if the invariant vector $\theta$ is replaced by some proportional vector   $\gamma  \theta =  (\gamma   \theta _1,\ldots, \gamma   \theta _N)$,  where $\gamma >0 $ is arbitrary.
 \end{rem}

\section{Equivalence of ensembles for generalized Jackson networks}\label{sec4}
 
 The product formula  \eqref{invariant}  can be interpreted as follows: the invariant joint distribution of the $N$ queues is the distribution of $N$ independent random variables   conditioned to the constraint that their sum is $M$. 
 
 Moreover,  for  \emph {arbitrary positive} $\gamma$, remark ~\ref{uptoaconstant} allows  one to  choose  the individual distributions of these independent variables, denoted  $ \eta ^{\gamma}  _1, \dots ,  \eta ^{\gamma}  _N$, as given by
 \begin{equation} \label{individual} \P(  \eta ^{\gamma}  _j = n) = \frac{1}{Z_j(\gamma)} \, \frac{(\gamma   \theta_{j})^{n}}{(g_j !) (n)} \qquad \text{ for }\,  0 \le n  < c_j +1,  \quad j= 1, \dots , N, 
 \end{equation}
 provided that $\gamma$ belongs to the domain of convergence of each  power series  \begin{equation}\label{series}
 Z_j(\gamma) = \sum _{n=0}^{  c_j} \frac{  \theta_{j}^{n}}{(g_j !) (n)}   \gamma ^{n} \qquad \text{ for }   \quad j= 1, \dots , N.  
 \end{equation} 
 For each such $\gamma$,   \eqref{invariant}  rewrites:   for  $(n_1, \dots , n_N)  \in \mathcal S _{N,M} $,
 \begin{equation}  \label{conditionning} \P \left (\xi _1= n_1, \dots , \xi _N =n_N \right ) = \P \left ( \eta ^{\gamma}_1= n_1, \dots , \eta ^{\gamma} _N =n_N ~\big |~  \sum _{j=1} ^N  \eta ^{\gamma} _j = M \right ). 
 \end{equation}
  In the physical terminology, the distribution in \eqref{invariant} is known  as the \emph {canonical ensemble}, while the product of the $N$ distributions in \eqref{individual} is the \emph {grand canonical ensemble}. The factor $\gamma$ is the so-called  \emph {chemical potential}.

  If it holds, the \emph {principle of equivalence} of  canonical and grand canonical ensembles tells that if $\gamma$ is rightly chosen, namely,  such that for $\eta ^{\gamma}  _j$'s as in \eqref{individual}, 
  \begin{equation} \label{chemicalpot}
   \E \left (\sum _{j=1}^N \eta ^{\gamma}  _j \right ) ~=~M~, 
   \end{equation}
   then the finite dimensional marginals of the distribution \eqref{invariant} are well approximated, for large $N$ and $M$, by products of  distributions  in \eqref{individual}. In other terms, for $N$ and $M$ large, one can forget the conditioning $  \eta ^{\gamma}  _1+\cdots +\eta ^{\gamma} _N = M$ provided that  $\gamma$ is rightly tuned, so that the total mean  of the $N$  free variables $\eta^{\gamma}_j$'s is $M$. 
   
   \medskip
   The following lemma proves strict monotonicity of  the left-hand side in  \eqref{chemicalpot}, ensuring uniqueness of $\gamma$ solving this equation. Existence of such a $\gamma$  holds, by continuity, provided that  for at least one $j$ (such that, necessarily, $Z_j(\gamma )$ has radius of convergence $\gamma ^*$)
   \[ \lim _{ \gamma \to \gamma ^*} \E \left ( \eta ^{\gamma}  _j \right ) = + \infty \]
  where $\gamma ^*$ is the minimal radius of convergence of all  the power series $Z_j(\gamma )$.
  
  \begin{lemma} \label{gamma_unique}
   Let $(X^{\gamma})_{0 < \gamma < \gamma ^*}$ be a family of $\, \N$-valued random variables with distributions
  \[ \P(  X ^{\gamma}  = n) = \frac{ \gamma ^{n} \phi (n) }{Z(\gamma)} \qquad \text{ for }\,  n \in \N,\]
  where $\phi$ is any nonnegative  function on $\N$ that is non-zero for  at least two points,
  \[ Z(\gamma) =   \sum _{n=0}^{\infty} \gamma ^{n} \phi (n), \]
  and $\gamma ^*$ is the radius of convergence of the series $Z(\gamma)$.
  Then $\E(X^{\gamma}) <+ \infty$ for  all $\gamma  \in ]0, \gamma ^*[$ and  the mapping $\gamma  \in ]0, \gamma ^*[  \ \longmapsto \E(X^{\gamma}) $ is  increasing.

  \end{lemma}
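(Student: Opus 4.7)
The plan is to realize the family $(X^\gamma)_{0<\gamma<\gamma^*}$ as a one-parameter exponential family and exploit the classical identity that the derivative of the mean equals the variance (up to a factor of $\gamma$). Concretely, I would first observe that the generating series $Z(\gamma)=\sum_n\gamma^n\phi(n)$ is analytic on $]0,\gamma^*[$, and that termwise differentiation gives $\gamma Z'(\gamma)=\sum_n n\,\gamma^n\phi(n)$, a series with the same radius of convergence $\gamma^*$. Hence
\[
\E(X^\gamma)=\frac{\sum_{n\ge0} n\,\gamma^n\phi(n)}{Z(\gamma)}=\frac{\gamma Z'(\gamma)}{Z(\gamma)}<+\infty
\]
for every $\gamma\in ]0,\gamma^*[$, which settles the finiteness claim.

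For the monotonicity part, the key step is to differentiate $\gamma\mapsto \E(X^\gamma)$ directly. Using the quotient rule and recognizing $\sum_n n^2\gamma^n\phi(n)/Z(\gamma)=\E((X^\gamma)^2)$, I would obtain
\[
\frac{d}{d\gamma}\E(X^\gamma)=\frac{1}{\gamma}\Bigl(\E\bigl((X^\gamma)^2\bigr)-\E(X^\gamma)^2\Bigr)=\frac{\mathrm{Var}(X^\gamma)}{\gamma}.
\]
Thus the mapping is non-decreasing on $]0,\gamma^*[$, and it is \emph{strictly} increasing as soon as $\mathrm{Var}(X^\gamma)>0$. This is where the hypothesis that $\phi$ is non-zero at (at least) two distinct points enters: if $\phi(n_1),\phi(n_2)>0$ for some $n_1\ne n_2$, then $\P(X^\gamma=n_1)$ and $\P(X^\gamma=n_2)$ are both positive for every $\gamma\in]0,\gamma^*[$, so $X^\gamma$ is not almost surely constant and its variance is strictly positive.

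There is no real obstacle here; the only mild care needed is to justify the termwise differentiation of the power series on every compact subinterval of $]0,\gamma^*[$, which is standard for power series within their disk of convergence. Putting the two displays together, $\gamma\mapsto\E(X^\gamma)$ is continuously differentiable on $]0,\gamma^*[$ with strictly positive derivative, hence strictly increasing, which is the desired conclusion.
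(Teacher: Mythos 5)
Your proposal is correct and follows essentially the same route as the paper: identify $\E(X^\gamma)$ with the ratio $\sum_n n\gamma^n\phi(n)/Z(\gamma)$ (all relevant series having radius of convergence $\gamma^*$), differentiate in $\gamma$ to obtain $\gamma^{-1}\mathrm{Var}(X^\gamma)$, and conclude strict positivity of the variance from $\phi$ being non-zero at two points.
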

  
  \begin{proof}
  Both series $\sum _{n=0}^{\{} n \gamma ^{n} \phi (n)$  and  $\sum _{n=0}^{\infty} n ^2 \gamma ^{n} \phi (n)$ have same radius of convergence $\gamma ^*$ as $Z( \gamma)$, and $\E(X^{\gamma})$ is given for all  $\gamma  \in [0, \gamma ^*[$ by
  \[ \E(X^{\gamma}) = \frac {\sum _{n=0}^{\infty} n \gamma ^{n} \phi (n)}{ Z( \gamma)}.\] 
  
  Differentiating with respect to $\gamma \in \, ]0, \gamma ^*[$ gives 
  \[\frac{\partial \E(X^{\gamma}) }{ \partial  \gamma} = \gamma ^{-1} \left ( \frac {\sum _{n=0}^{\infty} n^2 \gamma ^{n} \phi (n)}{ Z( \gamma)} - \left (  \frac {\sum _{n=0}^{\infty} n \gamma ^{n} \phi (n)}{ Z( \gamma)} \right ) ^2 \right ) =  \gamma ^{-1}.  \, Var X^{\gamma} >0 \] 
  from assumption that $\phi(n) >0$ for at least two values of $n$, ensuring that all variables $X^{\gamma}$ for $0 < \gamma < \gamma ^*$ are non a.~s.~constant.
  
  \end{proof}
  
  From now on, we will denote $\eta _1, \dots , \eta _N$ the variables  $ \eta ^{\gamma}  _1, \dots ,  \eta ^{\gamma}  _N$ associated with the $\gamma$ solving \eqref{chemicalpot}, if it exists.  They will be referred as the ``free variables".
  
  \bigskip
  The Local limit Theorem is a classical tool for proving that equivalence of ensembles holds (\cite{Dobrushin-1}, \cite{Fayolle-7}, \cite{Khinchin-1}, \cite{Kipnis-1}). One can assume without loss of generality that the finite dimensional distribution of interest is that of $(\xi _1, \dots , \xi _K)$ for some $K \ge 1$, and   write  for any $(n_1, \dots , n_K) \in \N^K$:  
    \begin{multline} \label{key}
  \P \left (\xi _1= n_1, \dots , \xi _K =n_K \right ) = \P \left (\eta    _1= n_1, \dots , \eta   _K =n_K ~\big |~  \sum _{j=1} ^N \eta   _j = M \right ) \\
   =   \P \left (\eta    _1= n_1, \dots , \eta    _K =n_K \right ) \frac{\P  \left (\sum _{j=K+1} ^N \eta   _j = M -  \sum _{j=1} ^K n_j \right )}{\P  \left (\sum _{j=1} ^N \eta   _j = M \right )}.
    \end{multline}
    where the last equality results from independence of  the $\eta   _j$'s.
    
   If the Local Limit Theorem holds for both families of  variables $(\eta _j~,~ 1 \le j \le N)$ and $(\eta _j ~,~ K+1 \le j \le N)$, as $N$ goes to infinity, then since  by choice of $\gamma$,  $\E( \eta _1 )+\cdots +\E( \eta   _N )=M$,  one gets, informally:  
  $\quad \displaystyle{\P  \left (\sum _{j=1} ^N  \eta   _j = M \right ) \approx  \frac{1}{ b_N \sqrt{ 2 \pi}}} \quad $  and  
   \[   \P  \left (\sum _{j=K+1} ^N \eta   _j = M -  \sum _{j=1} ^K n_j \right )  \approx  \frac{\exp \left (- \frac{1}{2 \left ( b_N^2 -\sum _{j=1} ^K   \sigma _j^2 \right )} \left ( \sum _{j=1} ^K (\E(\eta _j) - n_j) \right )^2   \right )}{ \sqrt{ 2 \pi ( b^2_N-  \sum _{j=1} ^K  \sigma^2 _j )}} \] 
   where  $ b_N^2 =    \sum_{j=1}^N  \sigma _j^2 \, $,  denoting $ \sigma _j$  the standard deviation of  $\eta _j$.
   
   Now if  both  $\sum _{j=1} ^K  \sigma^2 _j $ and $ ( \sum _{j=1} ^K (\E(\eta  _j) - n_j)  )^2$ are negligible with respect to $ b_N^2$, the right-hand side of the last approximation is  close to $(  b_N \sqrt{ 2 \pi})^{-1}$  and one gets  the expected equivalence
   \[ \P \left (\xi _1= n_1, \dots , \xi _K =n_K \right ) \approx \P \left (\eta _1= n_1, \dots , \eta    _N =n_N  \right ).\]

To make this formal,  we must  consider a sequence of networks. In what follows, this sequence will be indexed by the number $N$ of nodes,  thus adding a subscript  $N$ to all parameters and variables.  

\medskip
 Remark that if  the Local Limit Theorem applies to the complete vector of free variables $(\eta _1, \dots , \eta _N)$,  it provides an  equivalent for the partition function $Z$ in \eqref{invariant}, as function of (the implicit) $\gamma$. Indeed,  using  equations  \eqref{invariant}, \eqref{individual} and \eqref{conditionning}, 
 \[ Z  =  \gamma ^{-M}\,   \left (  \prod _{i=1}^N Z_i(\gamma) \right )   \,  \P \left ( \sum _{i=1}^N \eta _i = M \right ) \approx  (\gamma ^{M}  b_N \sqrt{ 2 \pi} ) ^{-1}\prod _{i=1}^N Z_i(\gamma). \]
  
   \medskip 
  For application to  vehicle-sharing systems which is the object of Section~\ref{sec5}, we need  consider only two types of nodes: single server  nodes (with finite or infinite  capacity) and infinite server   nodes with infinite capacity.  For a single server node $j$, when $n$ customers are present, the departure rate from $j$ is $g_j(n)$      given by
   \[ g_j(n) = \mu _j  \quad \text{ if } n \ne 0 \quad \text { and }  g_j(0)= 0, \]
  while for an infinite server node, $g_j$ is given by
   \[g_j(n) = \mu _j n   \quad \text{ for all } n \in \N ,\] 
   where in both cases,   $\mu _j >0 $ is the parameter of the exponential   services at $j$. 
   
   \smallskip
   Equation \eqref{individual} then tells  that the variable $\eta _j$ is 
   \begin{itemize}
  \item geometric with parameter  $ \gamma \,   \mu _j^{-1}\theta _j$, truncated  to $\{ 0, \dots , c_j\}$  if $c_j < \infty$, for a single server node $j$,
    \item Poisson with parameter $   \gamma \,  \mu _j^{-1}  \theta _j $ if node $j$ has infinitely many servers.
  \end{itemize} 
  We will first consider a model where all  nodes have  infinite capacity, and then a model where single server nodes have finite (uniformly bounded) capacity. The equivalence of ensembles for those two models will respectively be derived from  the local limit theorems stated in Propositions \ref{geompoisson} and \ref {trunkgeompoisson}  of Section \ref{sec2}.

\bigskip
\subsection{Networks with infinite capacity nodes.} \label{sunsec4.1}

\vspace{1mm} We here consider a sequence of standard closed Jackson  networks:  all nodes have  infinite capacities. The network numbered $N$  has $N$ nodes (labelled by $1, \dots, N$)  and $M_N$ customers. Nodes are divided into  two  disjoint sets  $\mathcal J_1^N$ and $\mathcal J_2^N$. Those  in $ \mathcal J_1^N$ are single server nodes and those in $\mathcal J_2^N$ are infinite servers nodes.  To avoid trivialities, the set $\mathcal J_1^N$  is assumed to be non empty.
 
  For each network, we add a subscript $N$ to all quantities already defined for a general Jackson network,  thus denoting 
  \begin{itemize}
  \item $P_N=(p_{ij,N})_{1 \le i,j \le N}$ the (irreducible) routing matrix,
  \item $\theta _N = ( \theta_{1,N}, \dots , \theta _{N,N})$ the invariant probability vector associated to $P_N$,
  \item $\mu _{j,N}$ the rate of exponential services at node $j$ for $1 \le j \le N$,
  \item $(\xi _{j,N})_{1\leq j\leq N}$ the stationary node-occupation random vector.
\end{itemize}
The product formula  \eqref{invariant} here writes
 \begin{equation} \label{invariant_geom_poisson}  \P(\xi _{1,N}= n_1, \dots , \xi _{N,N} =n_N) = \frac{1}{Z _N}~  \,  \prod _{j \in  \mathcal J_1^N} r _{j,N} ^ {n_j}    \prod _{j \in \mathcal J_2^N} \frac {r _{j,N}^ {n_{j}}}{ n_{j} \,  !}
 \end{equation}
 where $(n_1, \dots , n_N ) \in \mathcal S_{N,M_N}$ and 
 \[  r _{j,N}= \mu _{j,N} ^{-1} \theta _{j,N} \qquad \text{  for } \quad  1 \le j \le N .\]
 
\smallskip
 The parameter 
 $r_{j,N}$ is usually referred to as the \emph{utilization} of node $j$.  From Remark \ref{uptoaconstant}, one knows that those utilizations need only be defined up to a constant. For this reason, some authors  normalize the $r_{j,N}$'s in such a way  that for each $N$, the maximum  $\max \{ r_{j,N}, 1 \le j \le N \} $ is equal to $1$.   This normalization is crucial in \cite{Malyshev-6}, where it is fully part of the assumption that  the empirical distribution 
$ N^{-1} \sum _{i=1} ^N \delta _{r_{j,N}}$ converges to some probability measure. Here,  it  will not be necessary, since less stringent conditions are needed: The factor $\gamma$ will in some sense absorb the normalization factor.

.
\medskip

 For all $N$,  let $\gamma _N \in  [0,  1/ \max \{ r_{j,N} ,\, j \in \mathcal J_1^N\}   [\ $ be the (unique, from lemma \ref{gamma_unique})   solution  to  equation  \eqref{chemicalpot},  here given by  
 \begin{equation} \label{gamma1}
 \sum _{j \in \mathcal J_1^N} \frac{ \gamma _N r_{j,N}}{ 1-\gamma _N r_{j,N}}~ + ~ \sum _{j \in \mathcal J_2^N}  \gamma_N r_{j,N} ~=  ~M_N.
 \end{equation}
 (The left-hand side  of this equation clearly  goes from  $0$  to $+\infty$ as  $\gamma _N$ varies from   $0$ to $  1/\max \{r_{j,N}, \,j \in \mathcal J_1^N \}  $, ensuring existence of the solution.)
 
The free variables $ (\eta _{j,N})_{ 1 \le j \le N}  $  associated to  $(\xi _{j,N})_{1\leq j\leq N}$ are
independent  and  
   \begin{itemize}
  \item for $j \in \mathcal J_1^N$,  $ \eta _{j,N}$ is geometric with  parameter  $\gamma _N r _{j,N}$, 
  \item for $j \in \mathcal J_2^N$,  $ \eta _{j,N}$ is Poisson with  parameter  $\gamma _N r _{j,N}$,
  \end{itemize}
so that  $\sum _{j=1}^{N}\eta _{j,N}$ has mean $M_N$ (from equations \eqref{gamma1}) and  variance  
  \begin{equation} \label{variance1}
 b _N^2 ~=~ \sum _{j \in \mathcal J_1^N} \frac{ \gamma _N r_{j,N}}{ (1-\gamma _N r_{j,N})^2}~ + ~ \sum _{j \in \mathcal J_2^N}  \gamma_N r_{j,N} .
 \end{equation}

 \begin{theorem}\label{network1}
 Assume that  the following conditions are satisfied
 \begin{enumerate}
 \item $\displaystyle{\limsup _{N \to \infty}  \left (\gamma _N  \max _{j \in \mathcal J_1^N } \{ r_{j,N}\} \right ) ~<~1}$,\\
  \item $\displaystyle{\lim _{N \to \infty}  b_N= + \infty}$,
\vspace{1mm}
\item $K \ge 1$ is such that   $\displaystyle{ \lim _{N \to \infty} b_N^{-1} \! \sum _{j \in \mathcal J_2^N \cap \{ 1, \dots ,  K\}  }\gamma_N r_{j,N}  = 0}\,$.
 \end{enumerate}
Then for any $(n_1, \dots , n_K) \in \N^K$, 
 \begin{equation}\label {equivalence}
\lim _{N \to \infty} \frac { \P \left (\xi _{1,N}= n_1, \dots , \xi _{K,N} =n_K \right )}{ \P  (\eta _{1,N}= n_1  ) \dots  P ( \eta _{K,N} =n_K) } ~=~1.
\end{equation}

 \end{theorem}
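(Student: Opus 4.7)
\medskip
\noindent\textbf{Proof plan for Theorem \ref{network1}.}

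The plan is to start from the key identity \eqref{key}, which for each $N$ reads
\[
\P(\xi_{1,N}=n_1,\dots,\xi_{K,N}=n_K) = \P(\eta_{1,N}=n_1,\dots,\eta_{K,N}=n_K)\,\frac{\P(T_N = M_N - \sigma)}{\P(S_N=M_N)},
\]
where $S_N=\sum_{j=1}^N\eta_{j,N}$, $T_N=\sum_{j=K+1}^N\eta_{j,N}$, and $\sigma=\sum_{j=1}^K n_j$. Since the first factor on the right is precisely $\prod_{j=1}^K\P(\eta_{j,N}=n_j)$, proving \eqref{equivalence} reduces to showing that the ratio $\P(T_N=M_N-\sigma)/\P(S_N=M_N)$ tends to $1$. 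The route is to apply Proposition \ref{geompoisson} to both $S_N$ and $T_N$ and compare the resulting Gaussian asymptotics.

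First I would verify that Proposition \ref{geompoisson} applies to both families. For $S_N$, condition $(i)$ of the proposition follows from assumption $(i)$ of the theorem (the geometric parameters are $\gamma_N r_{j,N}$, uniformly bounded below $1$), and condition $(ii)$ is assumption $(ii)$. For $T_N$, the geometric parameters form a subfamily, so condition $(i)$ is inherited; condition $(ii)$ requires checking $\tilde b_N\to\infty$, where $\tilde b_N^2 = b_N^2 - \sum_{j=1}^K\sigma_{j,N}^2$. Using assumption $(i)$, each geometric variance in $\{1,\dots,K\}$ is bounded (by $\rho/(1-\rho)^2$ for some $\rho<1$), while by assumption $(iii)$ the sum of the Poisson variances in $\{1,\dots,K\}$ is $o(b_N)$. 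Hence $b_N^2-\tilde b_N^2 = O(1)+o(b_N) = o(b_N^2)$, which gives both $\tilde b_N\to\infty$ and $\tilde b_N/b_N\to 1$.

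Next I would write out the LLT conclusions. Since the chemical potential $\gamma_N$ is chosen so that $\E(S_N)=M_N$, Proposition \ref{geompoisson} gives $b_N\sqrt{2\pi}\,\P(S_N=M_N)\to 1$. For $T_N$, with mean $\tilde a_N = M_N - \sum_{j=1}^K\E(\eta_{j,N})$, one gets
\[
\tilde b_N\sqrt{2\pi}\,\P(T_N = M_N-\sigma) \;-\; \exp\!\Bigl(-\tfrac{(M_N-\sigma-\tilde a_N)^2}{2\tilde b_N^2}\Bigr) \;\longrightarrow\; 0.
\]
Here $M_N-\sigma-\tilde a_N = \sum_{j=1}^K(\E(\eta_{j,N})-n_j)$. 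Bounding $|\E(\eta_{j,N})|$ by $\rho/(1-\rho)$ on $\mathcal J_1^N$ and by $\gamma_N r_{j,N}$ on $\mathcal J_2^N$, assumptions $(i)$ and $(iii)$ give $|M_N-\sigma-\tilde a_N| = O(1)+o(b_N) = o(b_N)$, so the exponent tends to $0$ and the exponential factor to $1$.

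Combining these, $\tilde b_N\sqrt{2\pi}\,\P(T_N=M_N-\sigma)\to 1$, and since $\tilde b_N/b_N\to 1$ we conclude
\[
\frac{\P(T_N=M_N-\sigma)}{\P(S_N=M_N)} = \frac{b_N}{\tilde b_N}\cdot\frac{\tilde b_N\sqrt{2\pi}\,\P(T_N=M_N-\sigma)}{b_N\sqrt{2\pi}\,\P(S_N=M_N)} \;\longrightarrow\; 1,
\]
which yields \eqref{equivalence}. The only place where real work is needed is the two estimates $b_N^2-\tilde b_N^2=o(b_N^2)$ and $M_N-\sigma-\tilde a_N=o(b_N)$; both hinge on assumption $(iii)$, which is tailor-made so that the Poisson contributions from the first $K$ nodes — whose means may a priori grow with $N$ — do not spoil either the variance normalization or the shift in the Gaussian exponent. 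This is the main, though mild, obstacle.
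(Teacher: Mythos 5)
Your proposal is correct and follows essentially the same route as the paper's proof: starting from \eqref{key}, applying Proposition \ref{geompoisson} to both the full family and the family with the first $K$ variables removed, and checking via assumptions (1)--(3) that the removed variances are $o(b_N^2)$ and the shift in the Gaussian exponent is $o(b_N)$. The estimates you single out as the ``real work'' are exactly the ones the paper carries out.
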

 
 \begin{proof}
 We use \eqref{key}  (here with a second subscript $N$ for all random variables).
   From equations \eqref{gamma1} and \eqref{variance1},  $\sum _{j=1 } ^{N} \eta _i$ has mean $M_N$ and variance $ b _N^2$.
  
   Condition (1)  together with  inequalities  $ \gamma _N <  1/\max_ {j \in \mathcal J_1^N }r_{j,N}$ for all $N$  imply existence of some $\rho <1$ such that 
  \[ \forall N, \quad \forall j \in   \mathcal J_1^N, \qquad  \gamma _N r _{j,N} \le \rho. \]

Condition (2) then allows to  apply  Proposition \ref{geompoisson}  to  both families
 $(\eta _{j,N})_{1 \le j \le N}$   and $(\eta _{K+j,N})_{1 \le j \le N-K}$ (with respectively, $J(N)=N$ and  $J(N)=N-K$). 
 This is immediate for the first family.  As for the second family,  it results from
 \[ Var  \left ( \sum_{j=K+1}^{N} \eta _{j,N}\right ) = b_N^2 -  \sum _{j \in \mathcal J_1^N \cap \{ 1, \dots ,  K\}  }  \frac{ \gamma _N r_{j,N} }{(1-\gamma _N r_{j,N})^2}  - \sum _{j \in \mathcal J_2^N \cap \{ 1, \dots ,  K\}  }\gamma_N r_{j,N} , \] 
 that  $ Var  \left ( \sum_{j=K+1}^{N} \eta _{j,N}\right ) \sim b_N^2$ as $N$ goes to infinity, since    both sums in the right-hand side are negligible with respect to $b_N^2$: indeed  the first sum is bounded above by $K \rho / (1- \rho )^2$, and the second one is negligible with respect to $b_N$, hence to $b_N^2$,  from assumption (3).

  Then using \eqref{tll},
 $\    b_N \sqrt { 2 \pi} ~ \P  \left (\sum _{j=1} ^N \eta _{j, N} = M_N \right ) $  converges to $1$  and  
\begin{multline*}
    \sqrt { 2 \pi ( b^2_N-  \sum _{j=1}^K  \sigma^2 _{j,N} )  } \;  \P  \left (\sum _{j=K+1} ^N \eta _{j,N} = M_N -  \sum _{j=1} ^K n_j \right ) \\- \exp \left (- \frac{ \left ( \sum _{j=1} ^K (  m _{j,N} - n_j) \right )^2}{2 \left ( b_N^2 -\sum _{j=1} ^K    \sigma _{j, N}^2 \right )}   \right ) 
\end{multline*}
 converges to $0$  for any $n_1, \dots , n_K \in \N$, as $N$ goes to infinity,
  where for $1\leq j\leq K$,
  $   m_{j,N}=   \E( \eta _{j,N})$ and   $ \sigma _{j, N}^2 =  Var ( \eta _{j,N})$.

   Now, on one hand, $ b^2_N-  \sum  _{j=1}^K  \sigma^2 _{j,N}  =  Var  \left ( \sum_{j=K+1}^{N} \eta _{j,N}\right ) \sim b_N ^2$ as $N$ goes to infinity, as just shown. On the other hand, the argument of the exponential in the  last expression goes to zero since 
     \[\frac{1}{b_N} \sum _{j=1} ^K  | m _{j,N} - n_j | \le   b_N ^{-1} \sum _{j=1} ^K  n_j + K  b_N ^{-1} \rho /(1- \rho)      +    b_N ^{-1} \sum_{j \in \mathcal J_2^N \cap \{ 1, \dots ,  K\}  }  \gamma _N r _{j,N}  \]
   which goes to zero as $N$ tends to infinity, using  assumptions (2) and  (3).  Hence
    $$  \lim _{N \to \infty }   b_N \sqrt { 2 \pi} ~ \P  \left (\sum _{j=K+1} ^N \eta _{j,N} = M_N -  \sum _{j=1} ^K n_j  \right ) = 1.$$
   
   Equation  \eqref{equivalence}  follows,  using \eqref{key} together with independence of the $\eta _{j,N}$'s.
    \end{proof}

    \begin{rem}\label {hyp} (i) Note that, since $M_N \le  b_N ^2$  for all $N$, as can be seen from \eqref{gamma1} and \eqref{variance1}, condition (2) of Theorem~\ref{network1} is satisfied if $\  \lim_{N \to \infty} M_N = \infty$.
  
    (ii)  Assumption (3) is trivially satisfied  for $ \{ 1, \dots ,  K\} \subset  \mathcal J_1^N$.

        \end{rem}
        
        \medskip
\subsection{Networks with both finite and infinite capacity nodes.} \label{subsec4.2}

The  networks  now considered  differ from the previous ones on two main points.
 For all $N$,
\begin{enumerate}
\item  for $j = 1, \dots , N$, node $j$ has capacity $c_{j,N}$, which is  now \emph {finite} for $j \in   \mathcal J_1^N$, but  remains  infinite for  $j \in \mathcal J_2^N$;  we will denote $c_N = (c_{j,N}, 1 \le   j \le N )$,
\item  both $ \mathcal J_1^N$ and $ \mathcal J_2^N$ are assumed to be non-empty.
\end{enumerate}

Any of the dynamics described in the previous section can be considered:   blocking, blocking-rerouting or general state-dependent routing satisfying the setting of theorem \ref{state_dependent}. In the simple blocking  case, we assume  that for all $N$,  the \emph {routing matrix is reversible} with respect to its invariant distribution. Thus, in any of these situations, the queue-length process has stationary distribution  given by the same  formula \eqref{invariant_geom_poisson} as in the infinite capacity case,  here with state space $\displaystyle{ \mathcal S^{c_N}_{N,M_N}}$.

We keep the same notations   as before:  $r_{j,N}$  are the (non-normalized) utilizations, $ \mu _{j,N}$ the exponential service parameters,  $ \theta _N$ the invariant probability vector, either of the fixed routing matrix $P_N$ (for blocking or blocking/rerouting procedures),  or common to all state-dependent routings, in the setting of  theorem \ref{state_dependent}.

For all $N$, $\gamma _N$ is now uniquely defined  in $[0, + \infty[$ by
\begin{equation} \label{gamma2}
 \sum _{j \in \mathcal J_1^N} \frac{ \sum_{n=0}^{c_{j,N}} n (\gamma _N r_{j,N})^n}{ \sum_{n=0}^{c_{j,N}}  (\gamma _N r_{j,N})^n}~ + ~ \sum _{j \in \mathcal J_2^N}  \gamma_N r_{j,N} ~=  ~M_N, 
  \end{equation}
 or equivalently by
\[
\sum _{j \in \mathcal J_1^N} \left ( \frac{ \gamma _N r_{j,N}}{ 1-\gamma _N r_{j,N}} - (c_{j,N}+1) \frac{ (\gamma _N r_{j,N})^{c_{j,N}+1}}{ 1-(\gamma _N r_{j,N})^{c_{j,N}+1}} \right )~ + ~ \sum _{j \in \mathcal J_2^N}  \gamma_N r_{j,N} ~=  ~M_N \]
 with the following abuse:   term  $j$ of the first sum has to be replaced by $c_{j,N}/2$ if $\gamma _N r_{j,N}=1$. 
 
  Existence and uniqueness of $\gamma _N$ again result from lemma \ref{gamma_unique} and from  the fact that the second term in the left-hand side of equation \eqref{gamma2}  increases to infinity  with $\gamma _N$ (recall that $\mathcal J_2^N \neq \emptyset$).
  
  The free variables $ (\eta _{j,N})_{ 1 \le j \le N}  $  associated to  $(\xi _{j,N})_{1\leq j\leq N}$ are now  such that
   \begin{itemize}
  \item for $j \in \mathcal J_1^N$,  $ \eta _{j,N}$ is geometric with  parameter  $\gamma _N r _{j,N}$ truncated to $[0, c_{j,N}]$, 
  \item for $j \in \mathcal J_2^N$,  $ \eta _{j,N}$ is Poisson with  parameter  $\gamma _N r _{j,N}$.
  \end{itemize}
   Denoting by $  m _{j,N}$ and $   \sigma _{j,N}^2$ the mean and variance of $\eta _{j,N}$ (for $1 \le j \le N$),   equation \eqref{gamma2} rewrites  $\  M_N=  \sum_{j=1}^N  m _{j,N}$, while  $\,   b_N^2 = \sum _{j=1}^N  \sigma _{j,N}^2 $
or equivalently
  \[  b_N^2  = \sum _{j \in \mathcal J_1^N} \left ( \frac{ \gamma _N r_{j,N}}{( 1-\gamma _N r_{j,N})^2} - (c_{j,N}+1)^2 \frac{ (\gamma _N r_{j,N})^{c_{j,N}+1}}{ (1-(\gamma _N r_{j,N})^{c_{j,N}+1})^2} \right )~ + ~ \sum _{j \in \mathcal J_2^N}  \gamma_N r_{j,N}. 
 \]
 Here again,  term  $j$ of the first sum must be replaced by $c_{j,N} (c_{j,N}+2)/12$ when $\gamma _N r_{j,N}=1$.
 
 \begin{theorem}\label{network2}
  If  the following conditions are satisfied
 \begin{enumerate}
 \item there exists some $C < +\infty$ such that for all $ N \ge 1$ and $j \in \mathcal J^1_N$, $~c _{j,N} \le C,$ 
\item $\lim _{N \to \infty}  b_N = + \infty$,
\item $K \ge 1$ is such that   $\displaystyle{ \lim _{N \to \infty} b_N^{-1} \! \sum _{j \in \mathcal J_2^N \cap \{ 1, \dots ,  K\}  }\gamma_N r_{j,N}  = 0}\,$.
\end{enumerate}
 then \eqref{equivalence} holds for all $(n_1, \dots , n_K) \in \N^K$.

 \end{theorem}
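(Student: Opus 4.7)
The plan is to adapt the proof of Theorem \ref{network1} essentially verbatim, substituting Proposition \ref{trunkgeompoisson} for Proposition \ref{geompoisson} as the source of the Local Limit Theorem for the free variables. The starting point is again the identity \eqref{key}, which expresses the finite-dimensional marginal of the canonical ensemble as a product of free-variable probabilities times the ratio $\P\bigl(\sum_{j=K+1}^N \eta_{j,N} = M_N - \sum_{j=1}^K n_j\bigr) / \P\bigl(\sum_{j=1}^N \eta_{j,N} = M_N\bigr)$. The strategy is to show that both numerator and denominator, multiplied by $b_N \sqrt{2\pi}$, converge to $1$.

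First I would check that Proposition \ref{trunkgeompoisson} applies to both the full family $(\eta_{j,N})_{1 \le j \le N}$ and the shifted family $(\eta_{j,N})_{K+1 \le j \le N}$. Condition (i) of that proposition is exactly condition (1) of the theorem, and condition (ii) for the full family is condition (2). For the shifted family, the required lower bound on the variance reduces to showing that
\[ Var\Bigl(\sum_{j=K+1}^N \eta_{j,N}\Bigr) \,=\, b_N^2 \,-\, \sum_{j \in \mathcal J_1^N \cap \{1,\dots,K\}} \sigma_{j,N}^2 \,-\, \sum_{j \in \mathcal J_2^N \cap \{1,\dots,K\}} \gamma_N r_{j,N} \]
still tends to infinity. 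The first correction is bounded above by $K C^2$, since any random variable supported in $\{0,\dots,C\}$ has variance at most $C^2$; the second is $o(b_N)$, hence $o(b_N^2)$, by conditions (2) and (3). Therefore $Var\bigl(\sum_{j=K+1}^N \eta_{j,N}\bigr) \sim b_N^2$.

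Applying \eqref{tll} then yields $b_N \sqrt{2\pi}\, \P\bigl(\sum_{j=1}^N \eta_{j,N} = M_N\bigr) \to 1$ (recall $\E(\sum_j \eta_{j,N}) = M_N$ by choice of $\gamma_N$), and similarly the shifted sum concentrates on $M_N - \sum_{j=1}^K n_j$ once the exponential argument
\[ -\frac{\bigl(\sum_{j=1}^K (m_{j,N} - n_j)\bigr)^2}{2 \bigl(b_N^2 - \sum_{j=1}^K \sigma_{j,N}^2\bigr)} \]
is shown to vanish. Since $m_{j,N} \le C$ for $j \in \mathcal J_1^N$ and $m_{j,N} = \gamma_N r_{j,N}$ for $j \in \mathcal J_2^N$, one gets the bound $\sum_{j=1}^K |m_{j,N} - n_j| \le K C + \sum_{j=1}^K n_j + \sum_{j \in \mathcal J_2^N \cap \{1,\dots,K\}} \gamma_N r_{j,N}$, whose ratio to $b_N$ tends to zero by (2) and (3). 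Plugging these equivalents into \eqref{key} yields \eqref{equivalence}.

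The calculation is essentially identical to that of Theorem \ref{network1}; in fact the truncation simplifies several estimates, since means and variances for $j \in \mathcal J_1^N$ are now automatically bounded by a constant depending only on $C$, without need of an analogue of assumption (1) of Theorem \ref{network1}. The only point that deserves care is verifying condition (ii) of Proposition \ref{trunkgeompoisson} for the shifted family, which is handled by the uniform capacity bound together with condition (3).
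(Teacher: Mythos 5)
Your proposal is correct and follows exactly the route the paper takes: the paper's own proof of Theorem \ref{network2} simply invokes the argument of Theorem \ref{network1} with Proposition \ref{trunkgeompoisson} in place of Proposition \ref{geompoisson}, noting that condition (1) bounds the means and variances of the truncated-geometric free variables by a constant depending only on $C$. The details you supply (the variance of the shifted family being asymptotic to $b_N^2$, and the vanishing of the Gaussian exponent) are precisely the ones the paper leaves implicit by reference to the earlier proof.
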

 
 \begin{proof}
 The proof is the same as for Theorem \ref{network1}, here using Proposition \ref{trunkgeompoisson}. Condition (1) ensures  that all $ m_{j,N}$ and $ \sigma _{j,N}$ for $j \in \mathcal J_1^N$ are bounded by some constant (not depending on $N$), namely, by $C^2$  (since the corresponding $\eta _{j,N}$'s have values in $\{0, \dots , C\}$). 
 \end{proof}
 
  Note that $(ii)$ of Remark  \ref{hyp} still holds. But this is not the case for $(i)$: if $m_{\rho, c}$ and $\sigma ^2_{\rho , c} $ denote the mean and variance of some   truncated geometric random variable  $X_{\rho, c}$ with parameters $\rho$ and $c$,  the ratio $ \sigma  _{\rho,c} /m_{\rho,c} $ approaches   zero as $\rho$ goes to zero or to infinity, for fixed $c \ge 1$.

   \section{Application to bike-sharing systems}\label{sec5}
   
   The purpose of this section is to derive  practical results on the performance of  bike-sharing systems  such as  the Velib' network in Paris. Here, a (large) set of bikes  are distributed over a (large) number  of docking stations and offered for use to the population of the city.  Any user can take a bike at some station, and then ride to another station where he  returns the bike.   The payment process  requires that each bike be locked to a terminal,  so that each  station can accommodate only a given number of bikes.

    \bigskip
\subsection {Infinite capacity approach.} \label{subsec5.1}
   
  It is easily seen that such a system can be modeled - at least  as long as all stations remain below capacity - by a closed Jackson network with a fixed number of customers (here  the bikes), and both single server nodes (the stations) and  infinite servers nodes (the routes from one station to another). Indeed, assuming that the arrival process of users at any station  $a$ is  Poisson with  parameter $\mu _a$, then bikes leave station $a$ at rate $\mu _{a}$ (here, inter-arrival intervals between users act as service durations for the waiting bikes). Next,  assuming that ride durations  are independent and  exponentially distributed  with parameter $\mu_{r}$ for  route $r$, the number of bikes on route $r$  decreases from  $n$ to $n-1$ at rate   $\mu _{r} n$. 
   
 These remarks  lead to a model with infinite node capacities  proposed in \cite{Fayolle-7} and  \cite{George-1}, that is considered now. 
   Denote by
       $\mathcal J_1$   the set of stations, with $ J_1 =|\mathcal J_1 |$, and   $\mathcal J_2 \subset \{ [ij] ~ :  ~i, j \in \mathcal J_1\}$, with $J_2= |\mathcal J_2 | \le J_1 (J_1 -1)$, the set of possible routes: Here $[ij]$ denotes the route from $i$ to $j$. It is assumed that the bike moves  obey to some  statistics which is constant in time: that is, any user taking a bike at station $i$ has probability $q_{ij}$ to put it back at station $j$.  Stations  here have unlimited  parking capacity, as well as  routes can  accommodate as many riders as necessary. Then bikes/customers  alternatively occupy nodes  in $\mathcal J_1$ and  in  $\mathcal J_2$ and move  according to the following routing matrix $P$ on  $\mathcal J_1 \cup \mathcal J_2$:
       \begin{equation}\label{routingGX}
        \   \forall i,j \in\mathcal J_1, \  \    p_{i [ij]} = q_{ij} \   \text{ and }  \   p_{[ij]j} =1 \  ; \ \     p_{ab} =0  \quad \text{for other  } a,b \in \mathcal J_1 \cup \mathcal J_2 . 
        \end{equation}
        Here $ \mathcal J_2$ is assumed to include all $[ij]$ such that $q_{ij}>0$, so that \eqref{routingGX} well defines a routing process on $\mathcal J_1 \cup \mathcal J_2$. 
   All services (inter-arrival times between users at stations or  trip durations) are independent, exponentially distributed with parameter $\mu _a$ at node $a$ ($a = j$  or $[ij]$ for some $i,j \in  \mathcal J_1$) and independent of the routing processes. This model thus constitutes a standard closed Jackson network with $M$ customers and $N =J_1+J_2$ nodes.
        
       Note that  $P$ is irreducible if and only if the following two conditions are satisfied:
           \begin{enumerate}
           \item the matrix $Q=(q_{ij}) _{i,j \in \mathcal J_1}$ is irreducible,
           \item $\forall i,j \in \mathcal J_1, \quad  [ij] \in \mathcal J_2~ \Longleftrightarrow ~q_{[ij]}>0.$
           \end{enumerate}
           We assume that this is the case. Then, denoting by $\nu = (\nu_j)_{j \in \mathcal J_1}$ the unique $Q$-invariant distribution on $\mathcal J _1$, the routing matrix $P$ has invariant distribution $ \theta = (\theta _a)_{a \in \mathcal J_1 \cup  \mathcal J_2}$  given by
             \begin{equation} \label{vector_GX}
              \theta _j = \frac{1}{2} \nu _j   \quad \text{ for } j \in \mathcal J_1 \qquad \text {and } \qquad  \quad \theta _{[ij]} = \frac{1}{2} \nu _i  q_{ij} \quad \text{ for }  [ij] \in \mathcal J_2  . 
              \end{equation}
     
        \medskip

         The process  so defined fits the frame of paragraph \ref{classicalJackson} of Section \ref{sec3} and has stationary state  given by 
         \begin{equation} \label{invariant_GX} \P(\xi _{1}= n_1, \dots , \xi _{N}=n_N) = \frac{1}{Z }~  \,  \prod _{j \in  \mathcal J_1} r _{j} ^ {n_j}    \prod _{[ij] \in \mathcal J_2} \frac {r _{[ij]}^ {n_{[ij]}}}{ n_{[ij]} \,  !}  
                  \end{equation} 
         where    $(n_1, \dots , n_N) \in \mathcal S _{N,M}$ and 
               \[   \text{ for } j \in \mathcal J_1, \quad  r _{j} =  \frac{\nu _{j} }{2 \,  \mu  _{j}} \qquad \text{and } \quad \text{ for }  [ij] \in \mathcal J_2,  \quad  r _{[ij]}= \frac{ \nu _{i} q _{ij} }{ 2\,\mu _{[ij]}}.\]

      \smallskip
      This model does not wholly convey the complexity of real  bike-sharing systems, since it ignores the blocking  mechanism at saturated stations (which is the main problem for these networks).   Still,  it describes the   system up to the first blocking time. 
       In \cite{George-1}, asymptotic results are  provided for $M$ large, as $N$ is  fixed. Here Theorem \ref {network1}  is used   to describe this  network  as  both $M$ and $N$ are large.

           The practical problem  addressed is the following.  For a network with given stations and routes, how many bikes   and parking places should be offered ? It can  be assumed  that   from observation-based estimations, the demand of the users is well known: that is,   the frequencies at which users arrive at the different stations ($\mu _j$'s),  the popularities of the different routes ($q_{ij}$'s)  and  their  mean ride durations ($\mu_{[ij]}$'s). In other terms,  utilizations $r_a =\mu _a ^{-1} \theta _a   $, are known.  Theorem \ref {network1}  then  helps evaluating, through approximation by  geometric variables,  the probability that  some station is empty, or exceeds  some level of occupation, indicating how to adapt the  stations capacities and/to  the number $M$ of bikes offered.
         
         In order to derive results from Theorem \ref{network1}, we again consider a sequence of networks indexed by $N$, and as in section \ref{sec4}, add a subscript $N$ to all parameters defined above, rewriting  equation \eqref{invariant_GX} as  equation \eqref{invariant_geom_poisson}. 
                
         A simple way to ensure that condition (1) of Theorem \ref{network1} is satisfied is to
choose some small positive $\delta$ and set   \begin{equation} \label{chooseM}
       \gamma _N = \frac {1- \delta}{\max _{j \in \mathcal J_1^N} r_{j,N}} \quad \text{and} \quad     M_N~=~\sum _{j \in \mathcal J_1^N} \frac{ \gamma _N  r_{j,N}}{ 1-\gamma _N  r_{j,N}}~ + ~ \sum _{[ij] \in \mathcal J_2^N}  \gamma _N r_{[ij],N}  \, , 
          \end{equation}
so that  $\gamma _N$   solves equation \eqref{gamma1}.  Theorem \ref{network1}  then has  the following corollary.

\begin{cor}  Let $\delta \in [0,1]$ be fixed and for all $N$, define $\gamma _N$ and $M_N$ by \eqref {chooseM}.

(i) If all $r_{j,N}$ for $j \in \mathcal J^N_1$ are of the same order, that is if  
\[\liminf_{N \to \infty} \frac { \min _{j \in \mathcal J_1^N} r_{j,N}}{ \max _{j \in \mathcal J_1^N} r_{j,N}} >0\, ,\]
 then as $N$ goes to infinity,  the stationary queue-lengths at  stations get asymptotically independent,  and approximately geometric with parameters 
 $\gamma _N r_{j,N} = (1- \delta) \, r_{j,N} / \max _{i \in \mathcal J_1^N} r_{i,N}$  ($j \in \mathcal J_1^N$).

 (ii) If moreover,
\[\lim _{N \to \infty} \frac { \max _{[ij] \in \mathcal J_2^N} r_{[ij],N}}{  \sqrt {J_1^N} \max _{k \in \mathcal J_1^N} r_{k,N}}  = 0\, , \] 
then at stationarity, both  stations and routes become asymptotically independent, and the  level of occupation of route $[ij]$  is approximately Poisson with parameter $\gamma _N r_{[ij],N} = (1- \delta) r_{[ij],N} / \max _{k \in \mathcal J_1^N} r_{k,N}$.
\end{cor}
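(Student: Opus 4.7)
The plan is to deduce both assertions directly from Theorem \ref{network1}, applied to the sequence of networks $(\xi_{j,N})$ with the explicit chemical potential $\gamma_N$ and total population $M_N$ given by \eqref{chooseM}. First I would observe that \eqref{chooseM} is precisely equation \eqref{gamma1}: since $\gamma_N \max_{j \in \mathcal{J}_1^N} r_{j,N} = 1-\delta < 1$, the value $\gamma_N$ lies in the admissible range, and by inspection $M_N = \E(\sum_j \eta_{j,N})$ for the free variables of \eqref{individual}. Condition $(1)$ of Theorem \ref{network1} is then immediate, as $\gamma_N \max_{j \in \mathcal{J}_1^N} r_{j,N} \equiv 1-\delta$.

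For condition $(2)$, I would exploit the hypothesis of part $(i)$: there exists $\alpha > 0$ such that $r_{j,N} \geq \alpha \max_{k \in \mathcal{J}_1^N} r_{k,N}$ for all large $N$ and all $j \in \mathcal{J}_1^N$, whence $\gamma_N r_{j,N} \geq \alpha(1-\delta)$. Using the elementary bound $x/(1-x)^2 \geq x$ on $(0,1)$, each term of the first sum in the expression of $b_N^2$ (see Section~\ref{subsec4.2}, but specialised here to the infinite-capacity case of \eqref{variance1}) is bounded below by $\alpha(1-\delta)$, so
\[ b_N^2 \;\geq\; \alpha(1-\delta)\, |\mathcal{J}_1^N|. \]
Since the sequence of networks is genuinely growing (so $|\mathcal{J}_1^N| \to \infty$ is built into the setup of Section~\ref{sec5}), this yields $b_N \to \infty$. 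For part $(i)$, any $K$ station-indices may be relabelled to sit in $\{1,\dots,K\} \subset \mathcal{J}_1^N$, so condition $(3)$ holds trivially by Remark \ref{hyp}$(ii)$, and Theorem \ref{network1} delivers exactly the asymptotic product of geometric laws with parameters $\gamma_N r_{j,N} = (1-\delta)\,r_{j,N}/\max_k r_{k,N}$.

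For part $(ii)$, the additional assumption is tailored to yield condition $(3)$ when the first $K$ coordinates are allowed to mix stations and routes. I would bound
\[ \sum_{[ij] \in \mathcal{J}_2^N \cap \{1,\dots,K\}} \gamma_N r_{[ij],N} \;\leq\; K(1-\delta)\,\frac{\max_{[ij]\in\mathcal{J}_2^N} r_{[ij],N}}{\max_{k \in \mathcal{J}_1^N} r_{k,N}}, \]
and divide by $b_N \geq \sqrt{\alpha(1-\delta)\,|\mathcal{J}_1^N|}$, obtaining a quantity dominated by a constant multiple of $\max r_{[ij]}/(\sqrt{|\mathcal{J}_1^N|}\,\max r_k)$, which tends to $0$ by the extra hypothesis of $(ii)$. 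Theorem \ref{network1} then yields the asserted joint asymptotic independence of stations and routes, the Poisson marginals at route-nodes being read off from \eqref{individual} specialised to the infinite-server case.

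I do not anticipate an analytical obstacle here; the proof is essentially a check that the explicit choice \eqref{chooseM} makes conditions $(1)$--$(3)$ of Theorem \ref{network1} hold, with hypotheses $(i)$ and $(ii)$ of the corollary providing the two non-trivial inputs (condition $(2)$, then condition $(3)$ including route-indices). The only mild subtlety is the implicit divergence $|\mathcal{J}_1^N| \to \infty$, which is natural in the context of a sequence of bike-sharing systems of increasing size.
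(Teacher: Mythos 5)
Your proof is correct and follows essentially the same route as the paper's: check conditions (1)--(3) of Theorem \ref{network1}, with the lower bound $b_N^2 \ge c\, J_1^N$ doing the work for both (2) and (3) (the paper obtains it via $b_N^2 \ge M_N \ge \sum_{j \in \mathcal J_1^N} \gamma _N r_{j,N}$, you via termwise lower bounds on the geometric variances -- the same estimate in substance). The divergence $J_1^N \to \infty$ that you flag as an implicit subtlety is made explicit in the paper through $J_1^N \ge \sqrt N$, which follows from $J_2^N \le J_1^N(J_1^N-1)$.
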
  

\begin{rem} This corollary applies in particular under the following  natural set of assumptions, for which $\gamma _N$  and $M_N$ are both  of the order of $J_1^N$: 
\begin{itemize}
\item all $r_{j,N}$ for $j \in \mathcal J^N_1$ are of the order of $1/J^N_1$,
\item all $r_{[ij],N}$ for $j \in \mathcal J^N_2$ are of the order of $1/J^N_2,$
\end{itemize}
(as for example if all  $\mu _{j,N} $ and  $\mu _{[ij],N}$ are of the order of $1$,   $\nu _{j,N}$  of the order of $1/J^N_1$ and  $\nu_{i,N} q_{ij,N}$  the order of $1/J^N_2$. Recall that $\sum _ j \nu _{j,N} =  \sum _ {[ij]} \nu _{i,N}  q_{[ij],N}  =1$).
Indeed, $\sqrt  {J_1^N} / J_2^N  \le 1/ \sqrt {J_1^N} \to 0$ as $N \to \infty$, since  $J_1^N  \le J_2^N $ by irreductibility of $(q_{ij}^N)$, and     $N =J_1^N + J_2^N \le (J_1^N )^2$. 

\end{rem}             
         
    \begin{proof}
    (i) Due to (ii) of Remark \ref {hyp}, we need only check that condition (2) of Theorem \ref{network1} is satisfied (condition (1) holds by choice of $\gamma _N$). But here
    \[ b_N^2 \ge M_N \ge \sum _{j \in \mathcal J_1^N}  \gamma _N  r_{j,N} = (1- \delta) \frac {  \sum _{j \in \mathcal J_1^N}   r_{j,N}} {\max _{j \in \mathcal J_1} r_{j,N}}  \ge (1- \delta) \,  J_1^N \frac { \min _{j \in \mathcal J_1^N} r_{j,N}}{ \max _{j \in \mathcal J_1^N} r_{j,N}} \]
    which goes to infinity with $N$, since $J_1^N \ge \sqrt N$ and $\liminf_{N \to \infty} \frac { \min _{j \in \mathcal J_1^N} r_{j,N}}{ \max _{j \in \mathcal J_1^N} r_{j,N}} >0$.

    (ii) We  need to prove that  $\max _{[ij] \in \mathcal J ^N_2}  \gamma _N r_{[ij],N} /b_N$ goes to zero as $N$ goes to infinity, in order that (3) of Theorem \ref{network1} holds. But  as just shown, $b_N^2 \ge c  \, J_1^N$  for some positive $c$ and $N$ sufficiently large. Hence,  for large $N$,
 \[ \forall [ij] \in \mathcal J^N_2, \quad \gamma _N r_{[ij],N} /b_N =  (1-\delta)  \frac {r_{[ij],N}} {b_N \max _{k \in \mathcal J_1^N} r_{k,N}} \le \frac{1-\delta}{ \sqrt {c J_1^N} } \frac {r_{[ij],N}}{ \max _{k \in \mathcal J_1^N} r_{k,N}} ,\]
 which maximum over $[ij]$ goes to zero by assumption.   
    \end{proof}

 As a practical consequence,  under condition (i) of Corollary 5.1,  the probability 
  \begin{itemize}
 \item   for station $j$  to be empty is approximately $1- (1- \delta) r_{j,N} / \max _{j \in \mathcal J_1 ^N} r_{i,N}$, which is at least $\delta$, and exactly $\delta$  at the most loaded station, 
 \item that station $j$  has more than $n$ vehicles  is $\left[ (1- \delta) r_{j,N} / \max _{j \in \mathcal J_1} r_{i,N}\right] ^{n+1}$.
 \end{itemize}
  Capacity of station $j$ can then be fixed as the smallest $n$ such that  this probability  of  exceeding  level $n$ is less than some given $\epsilon$. 
  
  In the ideal situation  when all utilizations $r_{j,N}$ for $j \in \mathcal J_1^N$ are equal,   the above probabilities are the same for all stations. The probability for any station to be empty,  $\delta$,  is determined by the total number $M_N$ of bikes, given by equation \eqref{chooseM}. And  the   smallest capacity that sets the blocking probability  (at any station) below some given level $\epsilon$ is given by the integer part of  $ \log \epsilon / \log (1- \delta)  $.

     \bigskip
  \subsection { Finite capacity approach.}

     We  now directly consider networks with finite parking capacity (and still infinite capacity routes). Remark first that  the routing matrix of the  model of \cite{George-1}, described in Section \ref{subsec5.1}, is  not reversible ($p_{[ij]i} =0$ for $i\ne j$ while $p_{i[ij]} =q_{ij}$). Thus, the  derived model with finite capacity stations and \emph {simple blocking} procedure (as defined in Section \ref{subsec3.2}) is  not tractable,  its stationary distribution being unknown. (Anyway, this procedure would  not be realistic, making blocked users perform again the same ride as the one just completed).

     Still, two options are open for modeling such finite parking capacity networks.  One is to consider the same routing as in   \cite{George-1} together with the \emph {blocking and rerouting} policy described in Section \ref{sec3}. This choice turns out to be relevant for bike-sharing systems. Indeed,  since transitions occur only from stations to routes and from routes to stations,  any user blocked at his end-of-route station $i$ will  choose some route at random among the routes issuing from $i$, which well describes the reality. Note that in the standard blocking and rerouting procedure, the new route is chosen according to the same probabilities $q_{ij}$'s as for  new arrived users, which may seem unrealistic, or  approximative (the $q_{ij}$'s  need then  combine   behaviors of  new arrived and redirected users). We will discuss existence of other state-dependent routings that may be  tractable (i.e.~with product-form stationary state) and  credible for bike-sharing.

     The other option is to use  pure blocking dynamics for a simpler model,  in which  all the routes are aggregated into one unique 
node,  having  infinite many servers and infinite capacity. The transition probability from any station to this unique route is then equal to $1$, while the transition from this route to any station $j$ is given by some probability $q_j$, called the \emph {popularity} of station $j$. This basic model  is a simple, easy to handle, approximation of the network in \cite{George-1}, which  does not make account of the detailed movements of the vehicles. It has been introduced and analyzed in \cite{Heterogeneous}.     Note that the associate routing matrix  $P$ is reversible, so that the product form  \eqref{invariant}  holds. Indeed, numbering the $N$ nodes ($N-1$ stations and one route) so that the set of stations is $\mathcal J_1 = \{1, \dots , N-1\} $ and  the unique route is $N$,  then $P$ is given by 
      \begin{equation}\label{routingpop}
       p_{jN}= 1 \quad \text{ and } \quad  p_{Nj } = q_j  \quad \text{ for }  j \in \mathcal J_1 \  ;  \quad   p_{ij} = 0  \text{ otherwise}, 
      \end{equation}
  and the invariant probability $\theta = (\theta _1, \dots , \theta _N)$ by 
  \[  \theta _N = \frac{1}{2} \quad  \text{ and } \quad \ \theta _j = \frac{q_j }{2} \quad \text{ for } j = 1, \dots , N-1,  \]
  which clearly satisfies the reversibility condition \eqref{reversible}. 
  
  Denote by $c_j$, for $\ 1 \le j <N$, the finite capacity of station $j$.  
  The stationary state of the network is here given by
  \[\P(\xi _1= n_1, \dots , \xi _N =n_N) = \frac{1}{Z _N}~    \frac {r _{N}^ {n_{N}}}{ n_{N} \,  !}  \,  \prod _{j=1}^{N-1} r _{j} ^ {n_j}  \] 
 for  $n \in \N^N $ such that $\ n_1 + \dots +n_N= M\, $ and $\ n_j \le c_j\ $ for $\ 1 \le j <N$,        where
         \[   \text{ for } j \in \{1, \dots , N-1\}, \quad  r _{j} =  \frac{q _{j}}{2 \,  \mu  _{j}} \qquad \text{and   }   \qquad  r _{N} = \frac{ 1 }{ 2\,\mu _{N}}.\]
  
  Here, since there is one unique route, the simple \emph{blocking}  and  the \emph{blocking and rerouting}  procedures are actually undistinguishable,  both leading to the above product form.

  \begin{rem}
These simplified  set of nodes  and transition matrix can also be used in the  infinite capacity case,  instead of  those of  \cite{George-1}. 
  \end{rem} 
  
   In modeling a bike-sharing system by any of these two models, it seems realistic to suppose that all the capacities $c_j$ stay bounded as $N$ is large. Theorem \ref{network2} can then be applied.

    For the last model with one unique route, parameters $\mu _i$ of services (inter-arrivals of users and durations of rides)  should be assumed to be of the order of $1$, while the $q_i$'s should be of the order of $N ^{-1}$.  One gets $r_j$'s of the order of $N ^{-1}$ for $1 \le j < N$ and $r_N$ of the order of $1$. 
   
   Equation \eqref{gamma2} here rewrites
   \[M ~=~  \sum _{j =1}^{N-1} \left ( \frac{ \gamma  r_{j}}{ 1-\gamma  r_{j}} - (c_{j}+1) \frac{ (\gamma  r_{j})^{c_{j}+1}}{ 1-(\gamma  r_{j})^{c_{j}+1}} \right )~ + ~   \gamma r_{N}
 \]
 (with the same abuse  as in Section \ref{sec4}, for terms $j \in \{1, \dots , N-1\}$   such that  $ \gamma  r_{j}= 1$).  It can be used, as in  the infinite capacity case, to fix $M$ by choosing $\gamma$. The appropriate scale  is here $\gamma = \kappa N$ for  some positive  $\kappa$,  which gives asymptotically independent stations, with  truncated-geometric approximate distributions  with parameters $\kappa N r_j$ and $c_j$ ($1 \le j <N$).
 Indeed, since $ b_N^2 \ge   \gamma r_N$ which  is of  the order of $N$, the approximation \eqref {equivalence} of  Theorem \ref{network2} is here valid for $\{1, \dots , K \} \subset \mathcal J_1 = \{ 1, \dots , N-1\}$. 
 
 Note that nothing can be said about node $N$, since $b_N$ is of order $\sqrt N$ ($ \gamma r_N \le b_N^2 \le   \gamma r_N + (N-1) \max c_j$)  so that $b_N ^{-1}  \gamma r_N$ gets large with $N$. The unique route is thus not proved to be independent from the stations, nor to have some identified approximate distribution: In  physical terms,  this scaling of $M$ (through $\gamma = \kappa N$) corresponds to a supercritical regime, with condensation at node $N$ (that accomodates as many vehicles as the order of $M$).
 
 \smallskip
 As regards stations: for $1 \le j <N$, 
 \begin{itemize}
 \item $\ P( \xi _j = 0) \approx 1/\sum _{n=0}^{c_j} (\kappa N r_j)^n  $, which decreases with respect to $\kappa$
  \item$ \ P( \xi _j = c_j) \approx (\kappa N r_j) ^{c_j}/\sum _{n=0}^{c_j} (\kappa N r_j)^n  $, which increases with respect to $\kappa$.
  \end{itemize}
  
  If capacities are given, one can look for some $\kappa$ that gives a reasonable trade-off  between the previous probabilities. These estimations can also help defining the capacities. Indeed for fixed $\kappa$, the approximation $1/\sum _{n=0}^{c_j} (\kappa N r_j)^n $ of  $\  P( \xi _j = 0) $ decreases  as $c_j$   increases. This is also the case for $ (\kappa N r_j) ^{c_j}/\sum _{n=0}^{c_j} (\kappa N r_j)^n  $, that approximates $\  P( \xi _j = c_j) $, due to  Remark \ref{symmetry}.
  So  $c_j$  can be chosen as the smallest value that sets both quantities  below some given level $\epsilon$.  However  the first quantity cannot be decreased below $1/\sum _{n=0}^{\infty} (\kappa N r_j)^n$ (which is positive if $\kappa N r_j <1$).  Similarly, the second quantity  cannot get below $1/\sum _{n=0}^{\infty} (\kappa N r_j)^{-n} $ (positive if $\kappa N r_j >1$).  Here again, the ideal situation is when all $r_j$ are equal. This indeed allows to select $\kappa$ such that $\kappa N r_j = 1$ for all $j= 1, \dots , N-1$; so that the approximate values for $\ P( \xi _j = 0) $ and $\ P( \xi _j = c_j) $ approach zero for large $c_j$'s. But real networks generally do not satisfy this condition, so that  $\kappa$  can only be fixed such  that $\kappa N r_j =1$ for a group of stations with equal utilizations. Both  probabilities - that a station is empty, or saturated - can then be  arbitrarily reduced only for these given stations.  No choice of $\kappa$ and $c_j$ can be globally satisfactory.

  \medskip
    We will  now consider  models derived from that of \cite{George-1}, here  limiting  station capacities. We first investigate possible alternatives to the standard blocking and rerouting procedure - in which rerouting is ruled by  the same matrix $Q$ that rules  choice of destination.   The setup is the same as  described in  section \ref{subsec5.1}, except that each node $j$  in $\mathcal J_1$ has finite capacity $c_j$. The Jackson network dynamics is then modified, to avoid  overflow of capacity, through the following transitions and rates:
    For $n \in \mathcal  S_{N,M}^c$, where $c = (c_a) _{ a \in \mathcal J_1 \cup  \mathcal J_2}$, setting $c_{[ij]} = \infty$ for $[ij] \in  \mathcal J_2$,
     \begin{equation}  \label{newrerouting} \begin{array}{lll}
   n ~\longrightarrow ~  n -e_i + e_{[ij]}  & \text{ at rate } & \mu _i\, q_{ij}\,  \bf 1_{n_i > 0} \\
     n ~ \longrightarrow ~  n - e_{[ij]}+ e_j & \text{ at rate } & \mu _{[ij]}\,  n_{[ij]} \, \bf 1_{n_j < c_j}  \\
     n ~ \longrightarrow  ~ n - e_{[ij]}+ e_{[jk]} & \text{ at rate }  & \mu _{[ij]}\,  n_{[ij]} \, w_{ik}^{(j)}(n - e_{[ij]}) \, \bf 1_{n_j = c_j \, ,  \, n_{[ij]} >0} 
     \end{array}
     \end{equation}
     Here for all $m \in \mathcal  S_{N,M-1}^c$ and $j \in \mathcal J_1$, a Markovian transition matrix   $W^{(j)}(m) = (w_{ik}^{(j)}(m) )_{i,k \in \mathcal J_1\setminus \{j \}}$ on the set  $ \mathcal J_1\setminus \{j \}$  is given.
     The following result is a consequence of Theorem  \ref{state_dependent}. 
    
    \begin{prop}  If  for each $j \in \mathcal J_1$, all  matrices $W^{(j)}(m)$ for $m \in \mathcal  S_{N,M-1}^c$ solve the following set of equations, with unknown variable  $W =  (w_{ik})_{i,k \in \mathcal J_1\setminus \{j \}}$: 
    \begin{equation} \label{w} \forall k  \in \mathcal J_1  \  \text{ such that } \  k \neq j, \qquad \nu _j q_{jk} = \sum _{i \in \mathcal J_ 1 \setminus \{j \}} \nu _i q_{ij} w_{ik},  
    \end{equation}
then, the process with state space $\mathcal  S _{N,M}^c$  and  transitions defined in  \eqref{newrerouting}  has  stationary distribution given by  \eqref {invariant_GX}  for $(n_1, \dots , n_N) \in \mathcal  S _{N,M}^c$.     
    \end{prop}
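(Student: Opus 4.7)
The plan is to recognize the dynamics \eqref{newrerouting} as a particular instance of the general state-dependent routing framework \eqref{newrates} on the node set $\mathcal J_1 \cup \mathcal J_2$ (with $c_a = \infty$ for $a \in \mathcal J_2$), and then to invoke Theorem~\ref{state_dependent} with the vector $\theta$ given by \eqref{vector_GX}. The service rate functions are $g_i(n)=\mu_i \mathbf{1}_{n\ge 1}$ for $i \in \mathcal J_1$ and $g_{[ij]}(n)=\mu_{[ij]} n$ for $[ij] \in \mathcal J_2$, and for each $m \in \mathcal S^c_{N,M-1}$ I would define the state-dependent routing $P(m)=(p_{ab}(m))_{a,b \in A(m)}$ by
\[ p_{i,[ij]}(m)=q_{ij} \quad (i \in \mathcal J_1), \qquad p_{[ij],j}(m)=\mathbf{1}_{m_j<c_j}, \qquad p_{[ij],[jk]}(m)=w_{ik}^{(j)}(m)\,\mathbf{1}_{m_j=c_j}, \]
with all other entries zero. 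The rows sum to one by stochasticity of $Q$ and of each $W^{(j)}(m)$, and since routes have infinite capacity the image of each row sits inside $A(m)$, so $P(m)$ is a well-defined stochastic matrix on $A(m)$.

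The heart of the proof is then to verify the invariance condition $\sum_{i \in A(m)} \theta_i\, p_{ij}(m) = \theta_j$ of Theorem~\ref{state_dependent} for every $m \in \mathcal S^c_{N,M-1}$ and every $j \in A(m)$. I would split this into three cases. If $j \in \mathcal J_1$ with $m_j < c_j$, the only contributing sources are the routes $[\alpha j]$, each with weight $\theta_{[\alpha j]} \cdot 1 = \tfrac{1}{2} \nu_\alpha q_{\alpha j}$, so the sum equals $\tfrac{1}{2} \nu_j = \theta_j$ by $Q$-invariance of $\nu$. If $j = [jk] \in \mathcal J_2$ and $m_j < c_j$, only the station $j$ feeds $[jk]$, yielding $\theta_j q_{jk} = \tfrac{1}{2} \nu_j q_{jk} = \theta_{[jk]}$. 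Finally, if $j = [jk] \in \mathcal J_2$ and $m_j = c_j$, station $j$ has left $A(m)$ and only the rerouted contributions from the routes $[\alpha j]$ with $\alpha \ne j$ survive; the sum becomes $\sum_{\alpha \ne j} \tfrac{1}{2} \nu_\alpha q_{\alpha j}\, w_{\alpha k}^{(j)}(m)$, which by assumption \eqref{w} equals $\tfrac{1}{2} \nu_j q_{jk} = \theta_{[jk]}$.

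The decisive step is this last case: condition \eqref{w} is tailored precisely so that, at any saturated station $j$, the loss of the direct contribution $\theta_j q_{jk}$ is exactly compensated by the aggregated redirected mass arriving from the neighboring routes $[\alpha j]$. Once these three cases are checked, Theorem~\ref{state_dependent} applies and produces the product-form invariant distribution, which is exactly \eqref{invariant_GX} upon substituting the values of $\theta$ and $g_a$.
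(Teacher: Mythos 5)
Your proposal is correct and follows essentially the same route as the paper: casting \eqref{newrerouting} as an instance of \eqref{newrates} with the state-dependent matrix $P(m)$ you describe, verifying that the restriction of the vector $\theta$ from \eqref{vector_GX} to $A(m)$ is invariant for $P(m)$ via the $Q$-invariance of $\nu$ and condition \eqref{w}, and then applying Theorem~\ref{state_dependent}. Your three-case check simply makes explicit what the paper dismisses as ``straightforward to check,'' and it is carried out correctly.
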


    \begin{proof}
    It is easily seen that the rates in \eqref {newrerouting}  have the form in \eqref {newrates} with
    \[ \text{for }   h \in \N, \quad g_i(h)= \mu _i\,  {\bf 1}_{h >0}\  \  \text{for }   i \in  \mathcal J_ 1   \quad  \text{  and }  \quad g_{[ij]}(h)= \mu _{[ij]} h \   \text{ for }   [ij] \in  \mathcal J_ 2  \]
    and markovian transition matrix $P(m)$ on the set $A(m)= \{ j \in \mathcal J_1, m_j < c_j \} \cup \mathcal J_2$ given,  for $m \in \mathcal  S_{N,M-1}^c$,    by
    \[ \begin{array}{llll}   p_{j, [ij]}(m) & =  & q_{ij} &     \text{for } [ij] \in \mathcal J_2 \text{ such that } i \in A(m), \\
    p_{[ij],j}(m) & = &1& \text{for } [ij] \in \mathcal J_2 \text{ such that } j \in A(m), \\
     p_{[ij],[jk]}(m) & = &w_{ik}^{(j)}(m)  & \text{for } i,j,k \in \mathcal J_1  \text{ such that }   [ij] , [jk] \in \mathcal J_2 \text{ and } j \notin A(m), \end{array} \] 
     all other transitions having null rates.
     
     Moreover, it is straightforward to check that  for all $m \in \mathcal  S_{N,M-1}^c$,  the restriction  of  vector $\theta$ of \eqref {vector_GX} to the set $A(m)$ is  invariant with respect to  $P(m)$. This  results from  equations \eqref{w} together  with invariance of  $\nu$ with respect to  $Q$.  
     
     The proposition then results from Theorem \ref{state_dependent}.
     
      \end{proof}
    Equations \eqref{w} are clearly satisfied if $w_{ik}^{(j)} = q_{jk}$ for  all $i,j,k$, which is the only solution such that $w_{ik}^{(j)}$ depends only on $(j,k)$. In other terms,   the above dynamics generalize the  standard blocking and rerouting - for the Jackson network  in \cite{George-1} - in the sense that  redirection of users blocked  at end of route $[ij]$ may now take into account   their original station $i$.  Note that this excludes  natural  reroutings to stations in the neighborhood of $j$. Still, for each $j$, equations \eqref{w} have infinitely many solutions, as it appears by rewriting them as
        \[q_{jk} = \sum _{i \in \mathcal J_ 1 \setminus \{j \}} \widetilde q_{ji} w_{ik} \qquad \text{ or else } \qquad q_{j  .} = \widetilde q_{j .} \,  W, \] 
  where $\widetilde Q= (\widetilde q_{ij})=( \nu _i q_{ij} / \nu _j)$ is the time-reversed  matrix of $Q$ under its invariant vector $\nu$, and $q_{j .}$, $ \widetilde q_{j .} $ denote the $j$-th lines of $Q$ and $\widetilde Q$.
 No general explicit solution is available, save  for  $W ^{(j)} = (q_{jk})_{i \neq j, k \neq j}$. But in the special case when  \emph {$Q$ is reversible under} $\nu$, the last equation means that $W$ has invariant vector $q_{j.}$; so that $W=Id\, $ is a solution for all $j$.   Choosing this solution:  $w_{ik}^{(j)}(m) = {\bf 1}_{k=i}\  (i,k \neq j)$  for all $j$ and $m$, means that blocked users are sent back to their original station. It can be interesting to note that for each $j$ the set of solutions of \eqref{w} is convex, so that one can take  convex combinations - with coefficient possibly depending on $m$ - of different solutions. In the reversible $Q$ case,   the following rerouting is then  possible: blocked users at end of route $[ij]$  flip a coin, which probability of giving ``head" may depend on the current distribution $m$ of the other $M-1$ bikes, and according to the result,  either take route $[ji]$, or choose new route $[jk]$ with probability $q_{jk}$. 
 
 Other  solutions can be given in particular cases, as  if for example  $Q$ is uniform, that is,  $q_{ij}= (J_1-1) ^{-1}$ for all $i,j $ with $i \neq j$.   Here,  variants of the deterministic rerouting from $[ij]$ to $[ji]$ can be  mentioned, such as rerouting from  $[ij]$  to $[j i^*_{j,m}]$,  where $m$ is the state of the network excluding the blocked bike, and  for all $j, m$, 
 \begin{itemize}
 \item either  $i^*_{j,m}$  is deterministic, and $i  \mapsto i^*_{j,m}$ is a one-to-one mapping on $\mathcal J_1 \setminus \{j \}$,
 \item   or    $i^*_{j,m}$  is the first step of some symmetric random walk - which distribution may depend on $j$ and $m$ - on some graph with vertex set $\mathcal J_1$ and constant degree.
 \end{itemize}
 
  \medskip
   We  will now consider any  dynamics that leads to  stationary distribution given by \eqref{invariant_GX}, here on state space $S _{N,M}^c$.
     In order to use the asymptotic results of section \ref{subsec4.2}, we here again consider a sequence of networks indexed by $N$, and use the same notations as in the infinite capacity case of section \ref{subsec5.1}.
     
      A set of natural hypotheses is for example: 
   
    \smallskip 
    (H0) $\exists \, C \in \N$ such that $c_{j,N} \le C$ for all $j \in \mathcal J_1^N$,
    
     (H1)  $\exists\,  \mu _+ , \ \mu _- >0\ $ such that $\ \mu _- \le \mu _{a,N} \le \mu _+ \ $  for all $N\, $ and $\, a \in  \mathcal J_1^N \cup \mathcal J_2^N$,
    
  (H2)  $\exists  \, A >0$ such that $ \displaystyle{\max _{[ij] \in  \mathcal J_2^N} \nu _{i,N} q_{ij,N}  \le A / J_2^N}\ $ for all $N$. 
 
  \noindent (Recall that $ \displaystyle{ \sum _{[ij] \in \mathcal J_2^N}   \nu _{i,N} q_{ij,N} = 1/2}$).   
      Theorem \ref{network2} then has the following  corollary.
     
     \begin{cor} Assume that (H0), (H1) and (H2) hold and that $(M_N)$ satisfies
     \[ \lim _{N \to \infty} M_N = + \infty \qquad \text{ and } \qquad  \lim _{N \to \infty} (J_2^N)^{-2} \, M_N = 0. \]
     
          Then  as $N$  goes to infinity,  at stationarity, the different queue-lengths at stations and routes get asymptotically independent, with  respective approximate distributions
          \begin{itemize} 
          \item truncated-geometric  with parameters $c_{j,N}$ and $ \gamma _N r_{j,N}$ for station $j$, 
          \item Poisson with parameter $ \gamma _N r_{[ij],N}$ for route $[ij]$,
                   \end{itemize} 
     where $\gamma _N$ solves equation \eqref{gamma2}.  Moreover, $\gamma _N$ has same order of magnitude as $M_N$.
      
            \end{cor}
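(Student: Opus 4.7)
The plan is to verify the three hypotheses of Theorem~\ref{network2}, which then yields the asymptotic independence and the claimed approximate marginal distributions by relabeling nodes so that any prescribed finite collection occupies the first $K$ positions. Hypothesis (1) is exactly (H0), so it remains to establish hypothesis (2) ($b_N \to \infty$) and hypothesis (3) for every $K\geq 1$; the order-of-magnitude claim $\gamma_N \asymp M_N$ will fall out along the way.

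The first step is to observe that (H1), together with the identities $\sum_{j\in\mathcal J_1^N}\nu_{j,N}=1$ and $\sum_{[ij]\in\mathcal J_2^N}\nu_{i,N}q_{ij,N}$ of order $1$, implies that the total utilizations $\sum_{j\in\mathcal J_1^N}r_{j,N}$ and $\sum_{[ij]\in\mathcal J_2^N}r_{[ij],N}$ are each bounded above and below by positive constants independent of $N$. In particular, the second (route) sum in \eqref{gamma2} is of exact order $\gamma_N$, and since it is bounded by $M_N$, this immediately gives both $\gamma_N\lesssim M_N$ and $b_N^2\ge\sum_{[ij]}\gamma_N r_{[ij],N}\gtrsim\gamma_N$.

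For the reverse bound $\gamma_N\gtrsim M_N$, I need to control the first (station) sum in \eqref{gamma2}, namely $\sum_{j\in\mathcal J_1^N}m_{\gamma_N r_{j,N},\,c_{j,N}}$. The key elementary lemma is that, thanks to (H0), the ratio $m_{\rho,c}/\rho$ is bounded uniformly in $\rho>0$ and $c\in\{0,\ldots,C\}$: indeed $m_{\rho,c}/\rho\le\sum_{n=1}^c n\rho^{n-1}$, which is bounded by $\sum_{n=1}^C n\,2^{n-1}$ for $\rho\le 2$ and by $c/\rho\le C/2$ for $\rho\ge 2$. Combined with the $O(1)$ bound on $\sum_j r_{j,N}$, this forces $\sum_{j\in\mathcal J_1^N}m_{\gamma_N r_{j,N},\,c_{j,N}}\le C_1\gamma_N$ for some constant $C_1$, whence $M_N\le C_2\gamma_N$. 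Thus $\gamma_N\asymp M_N$, which establishes the final claim of the corollary, and since $M_N\to\infty$ it forces $\gamma_N\to\infty$ and hence $b_N\to\infty$, giving hypothesis (2).

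For hypothesis (3), I fix $K\ge 1$ and use (H1)-(H2) to bound each individual route utilization by $r_{[ij],N}\le A/(2\mu_- J_2^N)$. Hence, using $\gamma_N\asymp M_N$ and $b_N^2\gtrsim\gamma_N$,
\[\frac{1}{b_N}\sum_{j\in\mathcal J_2^N\cap\{1,\ldots,K\}}\gamma_N r_{j,N}~\leq~\frac{KA\,\gamma_N}{2\mu_- J_2^N\,b_N}~\lesssim~\frac{\sqrt{\gamma_N}}{J_2^N}~\asymp~\sqrt{\frac{M_N}{(J_2^N)^2}}~\to~0.\]
Theorem~\ref{network2} then applies for every $K$, and after relabeling any desired finite collection of nodes to occupy the first $K$ indices, it yields the stated asymptotic independence together with the truncated-geometric and Poisson marginal approximations. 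The main obstacle is the uniform bound on $m_{\rho,c}/\rho$ required for the lower bound $\gamma_N\gtrsim M_N$; the remaining steps are routine bookkeeping with (H1)-(H2) and the scaling $M_N/(J_2^N)^2\to 0$.
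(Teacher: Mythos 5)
Your proposal is correct and follows essentially the same route as the paper: verify hypotheses (1)--(3) of Theorem~\ref{network2}, sandwich $M_N$ between constant multiples of $\gamma_N$ via equation \eqref{gamma2} using a uniform bound on the truncated-geometric mean, deduce $b_N^2\gtrsim\gamma_N\to\infty$, and use (H2) with $\gamma_N\lesssim M_N$ to check hypothesis (3). The only cosmetic difference is that the paper gets the mean bound in one line as $m_{\rho,c}\le c\rho$ valid for all $\rho>0$, whereas you split into the cases $\rho\le 2$ and $\rho\ge 2$; both are fine.
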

     
     \begin{proof} Equation \eqref{gamma2} together with  the elementary relations
     \[    m_{\rho, c} ~ =  ~ \frac{ \sum_{n=1}^{c} n \rho^n}{ \sum_{n=0}^{c}  \rho^n} ~ \le ~ c \rho \,   \frac{ \sum_{n=0}^{c-1}  \rho^n}{ \sum_{n=0}^{c} \rho ^n} ~ \le ~ c \rho,  \quad \text  { for } \rho >0 \text{ and } c \in \N,\]
     imply the following inequalities, using (H0) and (H1): 
  \[ \frac{ \gamma _N}{2 \mu _+}  \le    \sum _{[ij] \in \mathcal J_2^N} \gamma_N  r_{[ij],N}  \le    M_N 
     \le   ~       C    
   \sum _{j \in \mathcal J_1^N}  \gamma _N r_{j,N}  ~ + ~ \sum _{[ij] \in \mathcal J_2^N}  \gamma _N  r_{[ij],N}      \le   ~   \frac{ \gamma _N}{2 \mu _-} (C+1),\]
   that show that $\gamma _N$ is of the same order  as $M_N$.
   
   Now assumptions (1), (2) and (3) of Theorem \ref{network2} are satisfied, for any $K\ge 1$. Indeed,   (1) is equivalent to  (H0). As for (2),   
   \[ b_N^2 ~ \ge    \sum _{[ij] \in \mathcal J_2^N} \gamma_N  r_{[ij],N}  ~\ge~  \frac{ \gamma _N}{2 \mu _+}   \,  , \]
   where $ \gamma _N  \ge  2 \mu _- M_N/(C+1) $ goes to infinity with $N$. And  for (3), using the previous lower bound on $b_N$, together with (H2) and  inequality $\gamma _N \le 2 \mu _+ M_N$, 
   \[ b_N^{-1} \max _{ [ij] \in \mathcal J_2^N} \gamma _N r_{[ij],N} ~  \le  ~ \frac {  \sqrt { 2 \mu _+  \gamma _N} }{ 2  \mu _ - }  \max _{ [ij] \in \mathcal J_2^N}  \nu _{i,N} q_{ij,N}~ \le ~  A \, \frac {    \mu _+}{   \mu _ - }   \frac { \sqrt { M _N}}{ J_2^N},\]
   which tends to zero as $N$ goes to infinity by assumption.
   
   The proof is then complete, using Theorem \ref{network2}. 
     \end{proof}

       Corollary  5.2 provides simple explicit approximations that may help measuring the performance of  a real network. As an example, the \emph {total stationary rate of failure} is given  (here removing index $N$)  by
       \[ \tau =  \sum _{j \in \mathcal J _1} \mu _j \P( \xi _j = 0) ~ +  \sum _{[ij] \in \mathcal J _2} \mu _{[ij]} \E \left ( \xi _{[ij]} \, { \bf 1} _{ \xi _j = c_j} \right ) .  \] 
       Using asymptotic independence and approximate distributions as stated in   Corollary  5.2, one gets
     \[ \tau \approx   \sum _{j \in \mathcal J _1}\frac{ \mu _j}{\sum _{n=0}^{c_j} (\gamma r_j)^n } ~ +  \sum _{[ij] \in \mathcal J _2} \mu _{[ij]} \gamma r _{[ij]} \, \frac{ (\gamma r _j)^{c_j}}{\sum _{n=0}^{c_j} (\gamma r_j)^n }  =   \sum _{j \in \mathcal J _1}  \mu _j\frac{ 1 +   (\gamma r _j)^{c_j+1}}{\sum _{n=0}^{c_j} (\gamma r_j)^n } , \] 
     where we have used $ \sum _i \mu _{[ij]} r _{[ij]} =  \sum _i \nu_i q_{ij}/2 = \nu_j /2 = \mu _j r_j. $  
     
     \smallskip
      If all traffic parameters are known, one can then minimize $\tau$ over $\gamma$, which amounts to choosing the best possible $M$, since $M$ and $\gamma$ are related by the one-to-one relation \eqref{gamma2}. Since the optimal $\gamma$, and hence  $M$,  should be of order $J_1 $ (if $r_j$'s are of order  $1/J_1$), conditions of Corollary 5.2 are satisfied.

    \section{Appendix}\label{secA}

\emph {Proof of Theorem \ref{TCL}.} 

\medskip
Following the lines of the proof of the Lindeberg Central Limit theorem given in \cite{Billingsley3} (Theorem 27.2  p. 359), it is not difficult to show  that \eqref{uniform} is satisfied, for some  sequence $(A_N)$ converging to infinity, provided that the following reinforcement of the Lindeberg condition is satisfied: 
 There exists some sequence  of positive real numbers   $\varepsilon _N$ such that $ \  \lim _{N \to \infty}   \varepsilon _N = 0 \ $ and  
\begin{equation}\label{Lindeberg+}
   \quad  \lim _{N \to \infty}  \frac{1}{b_N^{2}} \sum _{j=1}^{J(N)} \E \left ( (X_{j,N} - m_{j,N})^{2} 
{\bf 1}_{|X_{j,N} - m_{j,N}| > \varepsilon _N b_N} \right ) =0. 
  \end{equation}
  
  Now it is easily proved that the Lyapunov condition \eqref{Lyapunov} implies  existence of such a sequence $(\varepsilon _N)$. Indeed,  assuming that \eqref{Lyapunov} is satisfied, then  the following inequality holds  for any  positive $\varepsilon$,
  \begin{multline*}  \frac{1}{b_N^{2}} \sum _{j=1}^{J(N)} \E \left ( (X_{j,N} - m_{j,N})^{2} 
{\bf 1}_{|X_{j,N} - m_{j,N}| > \varepsilon  b_N} \right ) \\  \le  \frac{1}{\varepsilon ^{ \delta}b_N^{2+ \delta}} \sum _{j=1}^{J(N)} \E \left ( |X_{j,N} - m_{j,N}|^{2+ \delta} \right ) 
  =    \frac{\alpha _N}{\varepsilon ^{ \delta}} 
  \end{multline*}
   where $\delta$ is as in  \eqref{Lyapunov} and  $\alpha _N \stackrel{def}{=}  b_N^{-(2+ \delta)} \sum _{j=1}^{J(N)} \E \left ( |X_{j,N} - m_{j,N}|^{2+ \delta} \right ) $, so that $\lim \alpha _N =0$.  It results that \eqref{Lindeberg+} holds for $\varepsilon_N$ defined as  $\varepsilon_N =  \alpha _N ^{ 1/(2 \delta)}$, which clearly satisfies  $\ \lim \varepsilon_N =0$.

\bigskip 
\emph {Proof of Theorem \ref{TLL}.} 

\smallskip
First note that \eqref{uniform}  implies  the following, apparently stronger,  property
\begin{equation}\label{uniform+}
 \lim _{N \to \infty} \sup _{\ | t |  \le  B_N}\left | \, e^{\frac{t^2}{2}} \E \left (  e^{ it \frac{S_N-a_N}{b_N}}\right ) - 1 \right | = 0,
  \end{equation}
for some sequence $(B_N)$ of positive real numbers converging to infinity.
Indeed, assuming that \eqref{uniform} holds and  setting $\beta _N =  \sup _{\ | t |  \le  A_N}\left | \,  \E \left ( e^{ it \frac{S_N-a_N}{b_N}}\right ) -  e^{-t^2/2} \right | $ so that $\lim \beta _N = 0$, equation \eqref{uniform+} is then satisfied by any sequence $(B_N)$ such that
\[ 0 <B_N \le A_N \quad \text{ and } \quad e^{B_N ^2 /2} \le \beta _N ^{-1/2} \quad \text { for all } N, \]
since, if these inequalities hold, then 
\[ \sup _{\ | t |  \le  B_N}\left | e^{\frac{t^2}{2}} \E \left (  e^{ it \frac{S_N-a_N}{b_N}}\right ) - 1 \right |  \le  e^{\frac{B_N ^2}{ 2}}  \sup _{\ | t |  \le  A_N}\left | \, \E \left (  e^{ it \frac{S_N-a_N}{b_N}}\right ) - e^{-\frac{t^2}{2}} \right |   \le     \beta _N^{1/2}. \]
The numbers $B_N = \min ( A_N,  \sqrt { - \log \beta _N})$ thus satisfy  \eqref{uniform+} together with \mbox{$\lim B_N= \infty$.}

\medskip

The proof is now standard: Using inverse Fourier transform, it results from
\[ \E(e^{it S_N}) = \sum  _{k \in \Z} \P(S_N=k) \, e^{itk} \qquad (t \in \R), \]
that  for any  $k \in \Z$,
 \[    \P(S_N =k) = \frac{1}{2 \pi} \int _{- \pi} ^{ \pi} e^{-itk} \,   \E(e^{it S_N}) \,dt
=  \frac{1}{2 \pi b_N}  \int _{- \pi b_N} ^{ \pi b_N} e^{-itz_N(k)} \,   \E \left (e^{it  \frac{S_N - a_N}{b_N}} \right ) \,dt,\] 
where we have set $z_N(k) = (k-a_N)/b_N$.

It can be assumed without loss of generality that $  B_N \le \pi b_N$ for all $N \ge 1$. Otherwise, replace the sequence $(B_N)$ by $(B_N \wedge \pi b_N)$, which still satisfies \eqref {uniform+} and goes to infinity, since $ \lim_N b_N = \infty$  by assumption (1). Then for all $N$ and $k$,
\begin{align*} & \left | 2 \pi b_N  \, \P(S_N=k)       - \sqrt {2 \pi} e^{-z_N^2(k)/2} \right |\\
&  =   \left | \int _{- \pi b_N} ^{ \pi b_N} e^{-itz_N(k)} \,  \E \left (e^{it  \frac{S_N - a_N}{b_N}} \right ) \,dt  -  \int _{- \infty} ^{ \infty } e^{-itz_N(k)- t^2/2} \,dt  \right |  \\
&= \Big  | \int _{- B_N} ^{ B_N}  e^{-itz_N(k)- t^2/2}   \left (e^{t^2/2} \, \E \left (e^{it  \frac{S_N - a_N}{b_N}} \right ) -1 \right ) dt  \\   
& \qquad + 
 \int _{B_N \le  |t| \le  \pi b_N} e^{-itz_N(k)}  \, \E \left (e^{it  \frac{S_N - a_N}{b_N}} \right ) dt     
-  \int _{ |t| \ge B_N} e^{-itz_N(k)- t^2/2} \,dt  \Big | \\
 &\le   \left ( \sup _{|t| \le B_N}  \left | e^{t^2/2} \,  \E \left (e^{it  \frac{S_N - a_N}{b_N}} \right ) -1  \right |   \right )  \int _{- \infty} ^{ \infty } e^{- t^2/2} dt     \\
& \qquad +\int _{ |t| \ge B_N}  \phi(t) dt    +  \int _{ |t| \ge B_N} e^{- t^2/2}   dt . 
\end{align*}
Here assumption (3) has been used to dominate the second term.
 Since the three terms of the last sum do not depend on $k$ and converge to zero as $N$ goes to infinity, the theorem is proved.

\medskip
\emph {Proof of Proposition  \ref{geompoisson}.} 

\smallskip
One can assume without loss of generality that all the Poisson parameters $\lambda _{j,N}$ ($N \ge 1, j \in \mathcal J^2_N$) are bounded above by some fixed number, say $1$ for example.  Indeed, if this is not the case, one can replace each Poisson variable $X_{j,N}$ with  $\lambda_{j,N} >1$ by the sum of $\lfloor \lambda_{j,N}\rfloor +1 $ independent Poisson variables with the same parameter $ \lambda_{j,N} (\lfloor \lambda_{j,N} \rfloor +1 )^{-1}  <1$.
This change does not affect $S_N$, $a_N$ and $b_N$  (only $J(N)$ is increased). Thus  the proposition will be true in the general case if it is proved to hold when all $\lambda _{j,N}$ are less than $1$. 

Let us now show that   conditions (2) and (3) of  Theorem \ref{TLL}  are satisfied (note that (1) coincides with (ii) of the proposition). To prove that (2) holds, we use Theorem \ref{TCL} and check that the Lyapunov condition  is satisfied with $\delta =1$. The following domination of the centered third moment of any positive random variable $X$  by its non centered third moment is useful: 
\begin{equation}\label{third}
\E \left (|X-\E(X)|^3\right ) \le \E(X^3).
\end{equation}
This results from the  inequality, due to positivity of $X$, and hence  of $m =\E(X)$,
$|X-m|^3 \le (X+m) (X-m)^2= X^3 - mX^2-m^2 X +m^3, $
   so that 
   $\E(|X-m|^3) \le \E(X^3) -m \E(X^2) \le \E(X^3)$. 
   
   Recall that the third moment of a geometric  random variable with parameter $\rho$ is given by $ ( \rho+4 \rho^2 + \rho ^3)/(1-\rho)^3,$ while that of a Poisson variable with parameter $\lambda$ is $\lambda ^3 +3 \lambda ^2 + \lambda$.
   Using the fact that all  $\rho _{j,N}$'s and $\lambda_{j,N}$'s are less than $1$, one gets 
\begin{multline*}\frac{1}{b_N^{3}} \sum _{j=1}^{J(N)} \E \left ( |X_{j,N} - m_{j,N}|^3\right )  \le  \frac{1}{b_N^{3}}  \left ( \max _{1 \le j \le J(N)} \frac { \E \left ( |X_{j,N} - m_{j,N}|^3 \right ) }{\sigma _{j,N}^2 } \right ) \sum _{j=1}^{J(N)} \sigma _{j,N}^2 \\
\le    \frac{1}{b_N}   \max _{1 \le j \le J(N)} \frac { \E \left ( X_{j,N} ^3 \right ) }{\sigma _{j,N}^2 } \\
 \le   \frac{1}{b_N}   \left ( \max _{j \in \mathcal J_1^N}  \frac{ 1+4 \rho_{j,N} + \rho_{j,N} ^2}{1-\rho_{j,N}}     + \max _{j \in \mathcal J_1^N}  (\lambda_{j,N} ^2 +3 \lambda_{j,N}  + 1) \right)   
\le  \frac{1}{b_N}   \left (  \frac{ 6}{1-\rho}     +5  \right)  
\end{multline*}
where for the last step, assumption (i) is used.  Then by (ii), the Lyapunov condition is satisfied with $\delta=1$.

Now to check (3) of Theorem \ref{TLL}, recall that the characteristic function of a  geometric random variable $X$  with parameter $\rho$ is given by
\[ \E \left (e^{itX} \right ) =  \frac{1 - \rho }{1 - \rho e^{it}}  \]
while if $X$ is Poisson  with parameter $\lambda$, then
  \[ \E \left (e^{itX} \right ) = e^{ \lambda (e^{it}-1)}.  \]  
  One easily derives that 
  \begin{align*}
 & \left | \E \left (e^{itX} \right )  \right |=  \frac{1 - \rho }{\sqrt {1 - 2 \rho \cos t + \rho ^2}} \\
&=  \frac{1 - \rho }{\sqrt {(1-  \rho )^2 +2 \rho (1- \cos t)}} = \left  ( 1+4 \frac{\rho}{(1- \rho )^2} \sin ^2 (t/2) \right ) ^{-\frac{1}{2} } 
\end{align*}
 in the first case, while   in the second
 \[   \left | \E \left (e^{itX} \right )  \right | = e^{ - \lambda (1- \cos t)} = e^{-2 \lambda \sin ^2 (t/2)}. \]
 Then, using the  convexity inequality  $| \sin (t/2) | \ge |t |/ \pi$ for $|t | \le  \pi$,   $\  \left | \E \left (e^{itS_N} \right )  \right | =$
 \begin{align*}\prod  _{j=1}^{J(N)}  \left | \E \left (e^{itX_{j,N} }  \right )  \right | \le \left ( \prod _{j \in \mathcal J_1^N}  \left  ( 1+4  \frac{t^2}{\pi ^2}\frac{\rho_{j,N} }{(1- \rho_{j,N}  )^2}  \right )  ^{-\frac{1}{2} }  \right )  \exp   \left ( -2  \, \frac { t^2}{\pi ^2 } \sum  _{j \in \mathcal J_2^N} \lambda_{j,N}  \right ) \\
 = \exp  \left [  -\frac{1}{2}  \sum _{j \in \mathcal J_1^N}  \log \left  (1+4  \frac{t^2}{\pi ^2}\frac{\rho_{j,N} }{(1- \rho_{j,N}  )^2}  \right )  -2\,  \frac { t^2}{\pi ^2 } \sum  _{j \in \mathcal J_2^N} \lambda_{j,N}  \right] \quad \text{ for } |t | \le  \pi .
 \end{align*}
  Due to concavity of  the  $   \log$,  the inequality 
  $$4  \frac{t^2}{\pi ^2}\frac{\rho_{j,N} }{(1- \rho_{j,N}  )^2} \le 4 \frac{\rho}{(1- \rho )^2} \stackrel{def}{=} \tau$$
 that holds for all $t \in [- \pi, \pi]$ and all $j \in \mathcal J_1^N$ gives
  \[  \log \left  (1+4  \frac{t^2}{\pi ^2}\frac{\rho_{j,N} }{(1- \rho_{j,N}  )^2}  \right ) ~\ge~ 4 \frac{ \log(1+\tau)}{\tau}  \frac{t^2}{\pi ^2}\frac{\rho_{j,N} }{(1- \rho_{j,N}  )^2}.\] 
One obtains
 \begin{multline*} \left | \E \left (e^{itS_N} \right )  \right | \le   \exp  \left (  -2 \frac{ \log(1+\tau)}{\tau}  \frac{t^2}{\pi ^2}  \sum  _{j \in \mathcal J_1^N} \frac{\rho_{j,N} }{(1- \rho_{j,N}  )^2}  -2\,  \frac { t^2}{\pi ^2 } \sum  _{j \in \mathcal J_2^N} \lambda_{j,N}  \right)  \\
 \le   \exp \left  (-2 \frac{ \log(1+\tau)}{\tau}  \frac{ b^2_N t^2}{\pi ^2} \right )
 \quad \text{ for } |t | \le  \pi, 
  \end{multline*}
 since  \  $ \displaystyle { \frac{ \log(1+\tau)}{\tau} \le 1}$. The condition (3) of Theorem \ref{TLL} is thus satisfied with  integrable  $\phi$  given by  $\displaystyle {\phi (t) =  \exp \left  (-2 \frac{ \log(1+\tau)}{\tau}  \frac{  t^2}{\pi ^2} \right )}.$

\bigskip 
\emph {Proof of Proposition \ref{trunkgeompoisson}.}

\medskip
As in the preceding proof, one can assume that all $\lambda_{j,N}$ are less than one, and then proceed to check conditions (2) and (3) of Theorem \ref{TLL}. Condition (2) is obtained, here again, by proving that the Lyapunov condition holds with $\delta =1$. This goes exactly  as in Proposition \ref{geompoisson}, except
 for the truncated geometric variables  $X_{j,N}$ for $j \in  \mathcal J^1_N$. Here instead of using \eqref{third}, we use the following  inequality  (since $|X_{j,N} - m_{j,N}|$  is dominated by $c_{j,N}$):
\[\E \left ( |X_{j,N} - m_{j,N}|^3\right )   \le  c_{j,N} \E \left ( [X_{j,N} - m_{j,N}]^2\right ) =c_{j,N} \sigma _{j,N}^2 \le C  \sigma _{j,N}^2 .    \]
This gives
\begin{multline*}
\frac{1}{b_N^{3}} \sum _{j=1}^{J(N)} \E \left ( |X_{j,N} - m_{j,N}|^3\right )  \le  \frac{1}{b_N}  \left ( \max _{1 \le j \le J(N)} \frac { \E \left ( |X_{j,N} - m_{j,N}|^3 \right ) }{\sigma _{j,N}^2 } \right )  \\
 \le   \frac{1}{b_N}   \left ( C     + \max _{j \in \mathcal J_1^N}  (\lambda_{j,N} ^2 +3 \lambda_{j,N}  + 1) \right )  
\le  \frac{1}{b_N}   \left ( C     +5  \right )
\end{multline*}
and  by (ii),  the last quantity goes to zero as $N$ goes to infinity.  The Lyapunov condition, and hence (2), is satisfied.

As for condition (3),  using  Lemma~\ref{domination-Gnedenko} gives 
 \begin{multline*} \left | \E \left (e^{itS_N} \right )  \right | \le   \exp  \left (   - 2  \kappa   \frac {   t^2}{\pi ^2 } \sum  _{j \in \mathcal J_1^N} \sigma ^2_{j,N}  -2\,  \frac { t^2}{\pi ^2 } \sum  _{j \in \mathcal J_2^N} \lambda_{j,N}  \right)  \\
 \le   \exp \left  (-2 \min(1,\kappa) \frac{ b^2_N t^2}{\pi ^2} \right )
 \quad \text{ for } |t | \le  \pi.
  \end{multline*}

\providecommand{\bysame}{\leavevmode\hbox to3em{\hrulefill}\thinspace}
\providecommand{\MR}{\relax\ifhmode\unskip\space\fi MR }
\providecommand{\MRhref}[2]{%
  \href{http://www.ams.org/mathscinet-getitem?mr=#1}{#2}
}
\providecommand{\href}[2]{#2}

\bibliographystyle{amsplain}

\begin{thebibliography}{10}

\bibitem{Armendariz-1}
I.~Armend{\'a}riz and M.~Loulakis, \emph{Thermodynamic limit for the
  invariant measures in supercritical zero range processes}, Probab. Theory
  Related Fields \textbf{145} (2009), no.~1-2, 175--188. \MR{2520125
  (2010f:60268)}

\bibitem{Billingsley3}
P. ~Billingsley, \emph{Probability and measure}, third ed., Wiley Series in
  Probability and Mathematical Statistics, John Wiley \& Sons, Inc., New York,
  1995, A Wiley-Interscience Publication. \MR{1324786 (95k:60001)}

\bibitem{Dembo-1}
A.~Dembo and O.~Zeitouni, \emph{Large deviations techniques and
  applications}, Stochastic Modelling and Applied Probability, vol.~38,
  Springer-Verlag, Berlin, 2010, Corrected reprint of the second (1998)
  edition. \MR{2571413 (2011b:60094)}

\bibitem{Dobrushin-1}
R.~L.~Dobrushin and B.~Tirozzi, \emph{The central limit theorem and the
  problem of equivalence of ensembles}, Comm. Math. Phys. \textbf{54} (1977),
  no.~2, 173--192. \MR{0445641 (56 \#3977)}

\bibitem{Economou-1}
A.~Economou and D.~Fakinos, \emph{Product form stationary distributions for
  queueing networks with blocking and rerouting}, Queueing Systems Theory Appl.
  \textbf{30} (1998), no.~3-4, 251--260. \MR{1672127 (2000b:60219)}

\bibitem{Fayolle-7}
G.~Fayolle and J.-M.~Lasgouttes, \emph{Asymptotics and scalings for large
  product-form networks via the central limit theorem}, Markov Process. Related
  Fields \textbf{2} (1996), no.~2, 317--348. \MR{1414122 (98e:60151)}

\bibitem{Feller-2}
W.~Feller, \emph{An introduction to probability theory and its
  applications. {V}ol. {II}.}, Second edition, John Wiley \& Sons, Inc., New
  York-London-Sydney, 1971. \MR{0270403 (42 \#5292)}

\bibitem{FrickerGast-velib}
C.~Fricker and N.~Gast, \emph{Incentives and redistribution in
  bike-sharing systems with stations of finite capacity}, EURO Journal on
  Transportation and Logistics (2014), 1--31.

\bibitem{Heterogeneous}
C.~Fricker, N.~Gast, and H.~Mohamed, \emph{Mean field analysis
  for inhomogeneous bike sharing systems}, DMTCS Proceedings, 23rd Intern.
  Meeting on Probabilistic, Combinatorial, and Asymptotic Methods for the
  Analysis of Algorithms (AofA'12) (2012), 365--376.

\bibitem{George-1}
D.~K.~George and C.~H.Xia, \emph{Asymptotic analysis of closed queueing
  networks and its implications to achievable service levels}, SIGMETRICS
  Performance Evaluation Review \textbf{38} (2010), no.~2, 3--5.

\bibitem{Gnedenko-1}
B.~V.~Gnedenko, \emph{On a local limit theorem of the theory of probability},
  Uspehi Matem. Nauk (N. S.) \textbf{3} (1948), no.~3(25), 187--194.
  \MR{0026275 (10,132c)}

\bibitem{Grosskinsky-1}
S.~Gro{\ss}kinsky, G.~M.~Sch{\"u}tz, and H.~Spohn,
  \emph{Condensation in the zero range process: stationary and dynamical
  properties}, J. Statist. Phys. \textbf{113} (203), no.~3-4, 389--410.
  \MR{2013129 (2004h:82007)}

\bibitem{Khinchin-1}
A.~I.~Khinchin, \emph{Mathematical {F}oundations of {S}tatistical {M}echanics},
  Dover Publications, Inc., New York, N. Y., 1949, Translated by G. Gamow.
  \MR{0029808 (10,666c)}

\bibitem{Kipnis-1}
C.~Kipnis and C.~Landim, \emph{Scaling limits of interacting particle
  systems}, Grundlehren der Mathematischen Wissenschaften [Fundamental
  Principles of Mathematical Sciences], vol. 320, Springer-Verlag, Berlin,
  1999. \MR{1707314 (2000i:60001)}

\bibitem{Malyshev-6}
V.~A.~Malyshev and A.~V.~Yakovlev, \emph{Condensation in large closed
  {J}ackson networks}, Ann. Appl. Probab. \textbf{6} (1996), no.~1, 92--115.
  \MR{1389833 (97k:60256)}

\bibitem{Serfozo-1}
R.~ Serfozo, \emph{Introduction to stochastic networks}, Applications of
  Mathematics (New York), vol.~44, Springer-Verlag, New York, 1999. \MR{1704237
  (2000k:60193)}

\end{thebibliography}

\end{document}